\theoremstyle{definition}
\newtheorem{theo}{Theorem}[section]
\theoremstyle{definition}
\newtheorem{defin}{Definition}[section]
\theoremstyle{definition}
\newtheorem{conjecture}{Conjecture}[section]
\theoremstyle{definition}
\newtheorem{lemma}{Lemma}[section]
\DeclareMathOperator{\Li}{Li}
\title{Linear relations between logarithmic integrals of high weight and some closed-form evaluations}
\author{Kam Cheong Au}
\date{} 
\begin{document}
\maketitle
\begin{abstract}
The focus of our investigation will be integrals of form $\int_0^1 \log^a(1-x) \log^b x \log^c(1+x) /f(x) dx$, where $f$ can be either $x,1-x$ or $1+x$. We show that these integrals possess a plethora of linear relations, and give systematic methods of finding them. In lower weight cases, these linear relations yields remarkable closed-forms of individual integrals; in higher weight, we discuss the $\mathbb{Q}$-dimension that such integrals span. Our approach will not feature Euler sums. 
\end{abstract} 
\small{\textbf{Keywords}: Polylogarithm, Euler sums, multiple zeta values, logarithmic integral}

\hspace{10pt}

\section{Introduction}
Throughout, we will denote
$$\begin{aligned}i_{abc0} &= \int_0^1 \frac{\log^a(1-x) \log^b x \log^c(1+x)}{1-x} dx \\
i_{abc1} &= \int_0^1 \frac{\log^a(1-x) \log^b x \log^c(1+x)}{x} dx \\
i_{abc2} &= \int_0^1 \frac{\log^a(1-x) \log^b x \log^c(1+x)}{1+x} dx
\end{aligned}$$
The weight associated to any of them is $w=1+a+b+c$. For convergence reason, we need to assume $b\geq 1$ in $i_{abc0}$, $a\geq 1$ or $c\geq 1$ in $i_{abc1}$. They (non-convergent ones excluded) are collective called \textbf{polylogarithm integrals of weight $w$}.

\begin{lemma}
There are $(3w^2+w-2)/2$ polylogarithm integrals of weight $w$.
\end{lemma}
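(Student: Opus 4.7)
The plan is a straightforward stars-and-bars count, refined by the convergence constraints that the paper has already made explicit in the paragraph preceding the lemma. Since the weight is $w=1+a+b+c$, a polylogarithm integral of weight $w$ corresponds to a triple $(a,b,c)\in\mathbb{Z}_{\geq 0}^3$ with $a+b+c=w-1$, together with a choice of denominator type $j\in\{0,1,2\}$. The total number of nonnegative triples summing to $w-1$ is $\binom{w+1}{2}=w(w+1)/2$, so the unconstrained count of candidates is $3w(w+1)/2$.

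Next I would subtract, for each denominator type, the number of triples that fail the stated convergence criterion. For type $j=0$ (denominator $1-x$) the integrand has a pole of order one at $x=1$ unless $\log^b(x)$ contributes enough vanishing there, which by the paper's own convention requires $b\geq 1$; the substitution $b'=b-1$ gives $\binom{w}{2}=w(w-1)/2$ convergent triples, i.e.\ $w$ are discarded. For type $j=1$ (denominator $x$) the integrand blows up at $x=0$ unless $\log^a(1-x)$ or $\log^c(1+x)$ provides vanishing, so exactly the triple $(a,b,c)=(0,w-1,0)$ is excluded, removing $1$ element. For type $j=2$ (denominator $1+x$) there is no singularity in $[0,1]$, so all $w(w+1)/2$ triples survive.

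Summing the surviving counts,
\[
\frac{w(w-1)}{2}+\Bigl(\frac{w(w+1)}{2}-1\Bigr)+\frac{w(w+1)}{2}
=\frac{w^2-w+w^2+w+w^2+w}{2}-1
=\frac{3w^2+w-2}{2},
\]
which is the claimed formula.

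There is no real obstacle here; the only point worth double-checking is that the convergence conditions stated for $i_{abc0}$ and $i_{abc1}$ are both necessary and sufficient, and that type $j=2$ truly has no excluded triples. Both are immediate from the local behaviour of $\log^k(1\pm x)$ and $\log^k x$ at the endpoints against a bounded or simple-pole denominator, so the argument amounts to the elementary counting above.
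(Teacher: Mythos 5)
Your count is correct and is exactly the "easy counting exercise" the paper alludes to: $\binom{w+1}{2}$ triples per denominator type, minus $w$ for the $b\geq 1$ constraint on $i_{abc0}$ and minus $1$ for the excluded triple $(0,w-1,0)$ in $i_{abc1}$, which sums to $(3w^2+w-2)/2$ and checks against the six weight-$2$ integrals the paper lists. Nothing further is needed.
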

\begin{proof}
An easy counting exercise. For example, integrals of weight $2$ are: $i_{0100}, i_{0011}, i_{1001}, i_{0012}, i_{0102}, i_{1002}$. 
\end{proof}

Our main objective is to calculate these integrals in terms of polylogarithm $\Li_n(1/2)$ and Riemann-zeta function, where
$$\Li_n(z) = \sum_{k=1}^\infty \frac{z^k}{k^n}$$
One remarkable result in this paper is theorem \ref{remarkableweight8}:
\begin{multline*}
\int_0^1 \frac{\log^2 (1-x) \log^2 x \log^3(1+x)}{x} dx = -168 \text{Li}_5\left(\frac{1}{2}\right) \zeta (3)+96 \text{Li}_4\left(\frac{1}{2}\right){}^2-\frac{19}{15} \pi ^4 \text{Li}_4\left(\frac{1}{2}\right)+\\ 12 \pi ^2 \text{Li}_6\left(\frac{1}{2}\right)+8 \text{Li}_4\left(\frac{1}{2}\right) \log ^4(2)-2 \pi ^2 \text{Li}_4\left(\frac{1}{2}\right) \log ^2(2)+12 \pi ^2 \text{Li}_5\left(\frac{1}{2}\right) \log (2)+\frac{87 \pi ^2 \zeta (3)^2}{16}+\\ \frac{447 \zeta (3) \zeta (5)}{16}+\frac{7}{5} \zeta (3) \log ^5(2)-\frac{7}{12} \pi ^2 \zeta (3) \log ^3(2)-\frac{133}{120} \pi ^4 \zeta (3) \log (2)-\frac{\pi ^8}{9600}+\frac{\log ^8(2)}{6}- \\ \frac{1}{6} \pi ^2 \log ^6(2)-\frac{1}{90} \pi ^4 \log ^4(2)+\frac{19}{360} \pi ^6 \log ^2(2)
\end{multline*}

If explicit result is not available, we seek for linear relations satisfied between them. 


Let $\mathcal{A}$ be the algebra generated over $\mathbb{Q}$ by all polylogarithm integrals, and $\mathcal{I}_n$ be the ideal of $\mathcal{A}$ generated by these integrals which have weight $\leq n$. 
\begin{defin}
Let $i_1,\cdots,i_n$ be some polylogarithm integrals of weight $w$, a \textbf{relation} between them is a non-trivial $\mathbb{Q}$-linear combination of $i_1,\cdots,i_n$ that equals $0$ in $\mathcal{A}/\mathcal{I}_{w-1}$.
\end{defin}
For example, we have $i_{2231} = 0$ in $\mathcal{A}/\mathcal{I}_{7}$.

\par 
Given the well-known conjecture on algebraic independence of multiple zeta values of different weights, it would be reasonable to assume same is true for these integrals, we shall assume this is the case throughout. This implies $\mathcal{A}$ has a natural graded structure, so we can talk about the weight of constants such as $\pi^2$, $\zeta(n)$ and $\Li_n(1/2)$. However, in no way the explicit evaluation or validity of relations in the paper are contingent on this inaccessible assumption.\\ 

It must be mentioned that such integrals are usually attacked in literature via Euler sums like in \cite{1},\cite{2} and \cite{3}: each integral are converted in a combination of Euler sums with known values. However, virtually all discussions available are with weight $\leq 5$. Nonetheless, explicit results such as \ref{remarkableweight8} should also be provable by first converting to alternating MZVs (multiple zeta values), then using various shuffle relations. 

\section{Methods to obtain relations}
\subsection{Integration by parts}
The most straightforward way to obtain relations between $i_{abcd}$, where $d=0,1,2$, is integration by parts. Temporarily write $i_{abcd}$ as $f(a,b,c;d)$, then we have
$$\begin{aligned}f(a,b,c;1 - x) =  - af(a,b,c;1 - x) + bf(a + 1,b - 1,c;x) + cf(a + 1,b,c - 1;1 + x)\\
f(a,b,c;x) = af(a - 1,b + 1,c;1 - x) - bf(a,b,c;x) - cf(a,b + 1,c - 1;1 + x)\\
f(a,b,c;1 + x) = af(a - 1,b,c + 1;1 - x) - bf(a,b - 1,c + 1;x) - cf(a,b,c;1 + x)
\end{aligned}$$
A relation is discarded if any of four integrals on the same horizontal line is non-convergent. 
\\
Note that the elementary $$i_{00n2} = (\log 2)^{n+1} /(n+1)$$ also comes under this method.
~\\[0.01in]

\subsection{Fractional transformation}
We make the substitution $x = \frac{1-u}{1+u}$, then
$$u = \frac{1-x}{1+x} \qquad dx = \frac{-2}{(1+u)^2}du$$
$$\begin{gathered}
  {i_{abc0}} = \int_0^1 {\frac{{{{\log }^a}(\frac{{2u}}{{1 + u}}){{\log }^b}(\frac{{1 - u}}{{1 + u}}){{\log }^c}(\frac{2}{{1 + u}})}}{{u(1 + u)}}du}   \\
  {i_{abc1}} = \int_0^1 {\frac{{{{\log }^a}(\frac{{2u}}{{1 + u}}){{\log }^b}(\frac{{1 - u}}{{1 + u}}){{\log }^c}(\frac{2}{{1 + u}})}}{{(1 - u)(1 + u)}}du}   \\
  {i_{abc2}} = \int_0^1 {\frac{{{{\log }^a}(\frac{{2u}}{{1 + u}}){{\log }^b}(\frac{{1 - u}}{{1 + u}}){{\log }^c}(\frac{2}{{1 + u}})}}{{1 + u}}du} 
\end{gathered} $$
When we perform this substitution on $i_{abc2}$, using
$$\log (\frac{{2u}}{{1 + u}}) = \log 2 - \log u - \log (1 + u)\quad \log (\frac{{1 - u}}{{1 + u}}) = \log (1 - u) - \log (1 + u)\quad \log (\frac{2}{{1 + u}}) = \log 2 - \log (1 + u)$$
we can split RHS integral into a combination of different $i_{a'b'c'2}$, all of them have weight $\leq 1+a+b+c$. Assuming integrals of lower weights have been calculated, we now yield a relation between $i_{abc2}$ and a combinations of different $i_{a'b'c'2}$. For example, performing this substitution on $i_{0212}$, because
\begin{multline*}{\log ^2}(\frac{{1 - u}}{{1 + u}})\log (\frac{2}{{1 + u}}) = \log 2{\log ^2}(1 - u) - 2\log 2\log (1 - u)\log (1 + u) \\ - {\log ^2}(1 - u)\log (1 + u) + \log 2{\log ^2}(1 + u) + 2\log (1 - u){\log ^2}(1 + u) - {\log ^3}(1 + u)
\end{multline*}
therefore
$${i_{0212}} =  - {i_{0032}} + 2{i_{1022}} - {i_{2012}} + (\log 2){i_{0022}} - 2(\log 2){i_{1012}} + (\log 2){i_{2002}}$$
subsisting known values of lower weight ones $i_{0022}, i_{1012}, i_{2002}$ yield a relation of weight $4$ polylogarithm integrals. \par 

The same procedure can be carried out on $i_{abc0}$, The denominator $1/u(1+u) = 1/u - 1/(1+u)$. To ensure every term after expansion is a convergent integral, we need to assume $b\geq 1$. In this way, we can write $i_{abc0}$ into a combination of different $i_{a'b'c'1}$ and $i_{d'e'f'2}$. \par 

Due to convergent issue, we do not perform this substitution on $i_{abc1}$. 
~\\[0.01in]

\subsection{Explicit polylogarithm integrals}
Results from this and next subsection are well-known, but we still choose to record them here so that our methods of obtaining relations are as systematized as possible. This section contains two explicit evaluations, which serves as two relations.
\begin{lemma}
$$\int_0^1 {\frac{{{{\log }^n}(1 - x)}}{{1 + x}}dx}  = {( - 1)^n}n!{\operatorname{Li} _{n + 1}}(\frac{1}{2})$$
\end{lemma}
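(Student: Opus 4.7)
The plan is a direct series expansion after a change of variables. First I would substitute $u = 1-x$, which sends the interval $[0,1]$ to itself and transforms the denominator $1+x$ into $2-u$:
\[
\int_0^1 \frac{\log^n(1-x)}{1+x}\,dx = \int_0^1 \frac{\log^n u}{2-u}\,du = \frac{1}{2}\int_0^1 \frac{\log^n u}{1 - u/2}\,du.
\]
Next I would expand the denominator as a geometric series $\frac{1}{1-u/2} = \sum_{k=0}^\infty u^k/2^k$, valid for $u \in [0,1)$, and swap the sum with the integral (justified by absolute convergence, since $|\log^n u|$ is integrable on $[0,1]$ and the tail of the geometric series is uniformly bounded on any $[0,1-\epsilon]$, with the endpoint contribution negligible).

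This reduces the problem to computing the moments $\int_0^1 u^k \log^n u\, du$. The cleanest route is to write $\int_0^1 u^{k+s}\,du = 1/(k+1+s)$ and differentiate $n$ times in $s$ at $s=0$, yielding
\[
\int_0^1 u^k \log^n u\, du = \frac{(-1)^n n!}{(k+1)^{n+1}}.
\]
Substituting back gives
\[
\frac{1}{2}\sum_{k=0}^\infty \frac{1}{2^k}\cdot\frac{(-1)^n n!}{(k+1)^{n+1}} = (-1)^n n! \sum_{m=1}^\infty \frac{1}{m^{n+1}\, 2^m} = (-1)^n n!\, \operatorname{Li}_{n+1}(1/2),
\]
after reindexing $m = k+1$ and recognizing the defining series of the polylogarithm.

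There is no real obstacle here: every step is routine. The only technical point worth being careful about is the interchange of sum and integral at the endpoint $u = 1$, which is handled either by monotone convergence (since all terms have the same sign of $\log^n u$ on each of the intervals where it is of constant sign) or by truncating at $1-\epsilon$ and letting $\epsilon \to 0$. Everything else is computation.
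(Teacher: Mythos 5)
Your proof is correct and follows the same route as the paper: substitute $u=1-x$ to get $\tfrac12\int_0^1 \log^n u/(1-u/2)\,du$, expand the geometric series, and integrate termwise using $\int_0^1 u^k\log^n u\,du = (-1)^n n!/(k+1)^{n+1}$. The paper merely states this more tersely; your justification of the interchange is a welcome addition.
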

\begin{proof}
Write
$$\int_0^1 {\frac{{{{\log }^n}(1 - x)}}{{1 + x}}dx}  = \frac{1}{2}\int_0^1 {\frac{{{{\log }^n}x}}{{1 - x/2}}dx} $$
then expand $1/(1-x/2)$ as a geometric series, interchange order of integral and summation (easily justified), immediately gives the result.
\end{proof}

\begin{lemma}
$$\int_0^1 \frac{\log^{n-1} (1+x)}{x} dx = \frac{{{{\log }^n}2}}{n} + (n - 1)!\zeta (n) - \sum\limits_{k = 0}^{n - 1} {\binom{n-1}{k}k!{{(\log 2)}^{n - k - 1}}{{\operatorname{Li} }_{k + 1}}(\frac{1}{2})} $$
\end{lemma}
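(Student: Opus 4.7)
The plan is to substitute $u = x/(1+x)$, which maps $[0,1]$ to $[0,1/2]$ with $\log(1+x) = -\log(1-u)$ and $dx/x = du/[u(1-u)]$, turning the integral into
\[
(-1)^{n-1}\int_0^{1/2}\frac{\log^{n-1}(1-u)}{u(1-u)}\,du.
\]
The partial fraction $1/[u(1-u)] = 1/u + 1/(1-u)$ splits this into two pieces. The $1/(1-u)$ piece collapses via $w = -\log(1-u)$ to $\log^n(2)/n$, supplying the first term of the formula. For the $1/u$ piece I would substitute $v = 1-u$ to reach $(-1)^{n-1}\int_{1/2}^1\log^{n-1}(v)/(1-v)\,dv$ and then write $\int_{1/2}^1 = \int_0^1 - \int_0^{1/2}$. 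The $\int_0^1$ portion is the classical $(-1)^{n-1}(n-1)!\,\zeta(n)$, obtained by the geometric series and $\int_0^1 v^k \log^{n-1}(v)\,dv = (-1)^{n-1}(n-1)!/(k+1)^n$, producing the $(n-1)!\,\zeta(n)$ term of the claim.

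Everything then reduces to evaluating $J := \int_0^{1/2}\log^{n-1}(v)/(1-v)\,dv$, which I would handle by an integration-by-parts ladder. Set $J_k = \int_0^{1/2}\log^{n-k-1}(v)\,\Li_k(v)/v\,dv$ for $0\le k\le n-1$; noting $\Li_0(v)/v = 1/(1-v)$ and $\Li_{k+1}'(v) = \Li_k(v)/v$, one has $J_0 = J$ and $J_{n-1} = \Li_n(1/2)$. Integration by parts with $U = \log^{n-k-1}(v)$ and $dV = d\Li_{k+1}(v)$ yields the recursion
\[
J_k = (-\log 2)^{n-k-1}\,\Li_{k+1}(1/2) - (n-k-1)\,J_{k+1},
\]
whose unrolling gives $J = (-1)^{n-1}\sum_{k=0}^{n-1}\frac{(n-1)!}{(n-k-1)!}(\log 2)^{n-k-1}\Li_{k+1}(1/2)$. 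Using $(n-1)!/(n-k-1)! = \binom{n-1}{k}k!$ and cancelling the global $(-1)^{n-1}$ with the one produced by the $u$-substitution, the remaining contribution matches the sum in the statement exactly.

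The main obstacle is the sign and factorial bookkeeping across the $n-1$ iterations of IBP: one must track the alternating boundary signs from $\log^{n-k-1}(1/2) = (-1)^{n-k-1}\log^{n-k-1}(2)$, the falling factorial $(n-1)(n-2)\cdots(n-k)$ accumulated from successive IBPs, and the overall $(-1)^{n-1}$ from the opening substitution, before they recombine into the uniform coefficient $\binom{n-1}{k}k!$ valid for every $k\in\{0,\ldots,n-1\}$. The subtle point is that the $k=0$ term of the final sum arises not from the original $\log^n(2)/n$ piece but from the first boundary term $\Li_1(1/2)\,\log^{n-1}(2) = \log^n(2)$ of the IBP ladder; everything hinges on that contribution lining up with the general pattern. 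Vanishing of boundary terms at $v=0$ is automatic, since $\Li_{k+1}(v)\sim v$ dominates any power of $\log v$.
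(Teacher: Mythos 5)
Your proposal is correct and is, at bottom, the same argument as the paper's: the substitution $u=x/(1+x)$ followed by $v=1-u$ is exactly the paper's change of variables $x\mapsto 1/(1+x)$ in disguise, and your partial-fraction split, the $\log^n 2/n$ piece, and the $\int_0^1-\int_0^{1/2}$ decomposition producing $(n-1)!\,\zeta(n)$ all match the paper step for step. The only divergence is the final evaluation of $\int_0^{1/2}\log^{n-1}(v)/(1-v)\,dv$: you run an integration-by-parts ladder through $\Li_{k+1}$, while the paper rescales $v=x/2$, expands $\log^{n-1}(x/2)=(\log x-\log 2)^{n-1}$ binomially, and evaluates each $\int_0^1 \log^k x/(x/2-1)\,dx$ by geometric series --- both routes yield the same coefficients $\binom{n-1}{k}k!(\log 2)^{n-k-1}\Li_{k+1}(\tfrac12)$ (your bookkeeping of the signs and falling factorials checks out, including the $k=0$ term $\log^n 2$ coming from $\Li_1(\tfrac12)=\log 2$).
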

\begin{proof}
$$\begin{aligned}
  \int_0^1 {\frac{{{{\log }^{n - 1}}(1 + x)}}{x}dx}  &= \int_1^2 {\frac{{{{\log }^{n - 1}}x}}{{x - 1}}dx}  = {( - 1)^{n - 1}}\int_{1/2}^1 {\frac{{{{\log }^{n - 1}}x}}{{x(1 - x)}}dx}  \\
   &= {( - 1)^{n - 1}}\int_{1/2}^1 {\frac{{{{\log }^{n - 1}}x}}{x}dx}  + {( - 1)^{n - 1}}\int_{1/2}^1 {\frac{{{{\log }^{n - 1}}x}}{{1 - x}}dx}  \hfill \\
   &= \frac{{{{\log }^n}2}}{n} + {( - 1)^{n - 1}}\int_{1/2}^1 {\frac{{{{\log }^{n - 1}}x}}{{1 - x}}dx}   \\
   &= \frac{{{{\log }^n}2}}{n} + {( - 1)^{n - 1}}\int_0^1 {\frac{{{{\log }^{n - 1}}x}}{{1 - x}}dx}  - {( - 1)^{n - 1}}\int_0^{1/2} {\frac{{{{\log }^{n - 1}}x}}{{1 - x}}dx}   
\end{aligned}$$
where we make the substitution $x\mapsto 1/x$ in the third step. The second last integral is just $(-1)^{n-1} (n-1)! \zeta(n)$, and 
$$\int_0^{1/2} {\frac{{{{\log }^{n - 1}}x}}{{x - 1}}dx}  = \frac{1}{2}\int_0^1 {\frac{{{{\log }^{n - 1}}(x/2)}}{{(x/2) - 1}}dx}  = \frac{1}{2}\sum\limits_{k = 0}^{n - 1} {\binom{n-1}{k}{{( - \log 2)}^{n - k - 1}}\int_0^1 {\frac{{{{\log }^k}x}}{{x/2 - 1}}dx} } $$
The integral in the sum can be easily calculated using geometric series.
\end{proof}
~\\[0.01in]

\subsection{Gamma function}
Recall Euler's beta function, when $\Re(a),\Re(b)>0$, 
\begin{equation}\int_0^1 {{x^{a - 1}}{{(1 - x)}^{b - 1}}dx}  = \frac{{\Gamma (a)\Gamma (b)}}{{\Gamma (a + b)}}\end{equation}
Now expand both sides into series around $a=0,b=1$, the coefficient of $a^n (b-1)^m$ of LHS is (assuming $m\geq 1$)
$$\frac{1}{{n!m!}}\int_0^1 {\frac{{{{\log }^n}x{{\log }^m}(1 - x)}}{x}dx} $$
The RHS can be written as
\begin{equation}\frac{1}{a}\frac{{\Gamma (a + 1)\Gamma (b)}}{{\Gamma (a + b)}} = \frac{1}{a}\exp \left[ {\sum\limits_{k = 2}^\infty  {\frac{{{{( - 1)}^k}\zeta (k)}}{k}{a^k}}  + \sum\limits_{k = 2}^\infty  {\frac{{{{( - 1)}^k}\zeta (k)}}{k}{{(b - 1)}^k}}  - \sum\limits_{k = 2}^\infty  {\frac{{{{( - 1)}^k}\zeta (k)}}{k}{{(a + b - 1)}^k}} } \right]\end{equation}
where we used the expansion
$$\log \Gamma (1 + z) =  - \gamma z + \sum\limits_{k = 2}^\infty  {\frac{{{{( - 1)}^k}\zeta (k)}}{k}{z^k}} $$
valid for $|z|<1$, with $\gamma$ the Euler-Mascheroni constant. Using the usual expansion of $\exp z$, the coefficient of $a^n (b-1)^m$ in $(2)$ can be figured out using finitely many calculations. Therefore we proved

\begin{lemma}\label{Gamma1}
The value of $i_{nm01}$ and $i_{nm00}$ are in the algebra over $\mathbb{Q}$ generated by $\zeta(n)$ and $\pi^2$, with $n=3,5,7,\cdots$.
\end{lemma}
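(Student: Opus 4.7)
The plan is to extract $i_{nm01}$ and $i_{nm00}$ by matching Taylor coefficients on both sides of Euler's Beta identity $(1)$ expanded around $(a,b) = (0,1)$. First I would consolidate the two families: under the substitution $x \mapsto 1-x$ one has $i_{nm00} = i_{mn01}$, so it suffices to prove that $i_{nm01} \in \mathcal{Z} := \mathbb{Q}[\pi^2, \zeta(3), \zeta(5), \dots]$ for all $n \geq 1$ and $m \geq 0$.

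For the LHS of $(1)$, I would expand $x^{a-1}(1-x)^{b-1}$ as the double series
$$x^{a-1}(1-x)^{b-1} = \frac{1}{x}\sum_{k,\ell \geq 0} \frac{a^k (b-1)^\ell}{k!\,\ell!} \log^k x\, \log^\ell(1-x),$$
and integrate termwise. When $\ell \geq 1$ the factor $\log^\ell(1-x)$ kills the $x^{-1}$ singularity at $x=0$, so for every $m \geq 1$ the coefficient of $a^n(b-1)^m$ in $B(a,b)$ equals $i_{mn01}/(n!\,m!)$.

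For the RHS I would use $(2)$, which represents $a\, B(a,b)$ as the exponential of a polynomial in $a, b-1$ with coefficients in $\mathbb{Q}\cdot\zeta(k)$ for $k \geq 2$; the Euler--Mascheroni contributions from the three $\log\Gamma$ pieces cancel because $-\gamma a - \gamma(b-1) + \gamma(a+b-1) = 0$. Expanding the exponential, every coefficient of $a^n(b-1)^m$ becomes a finite $\mathbb{Q}$-linear combination of products $\zeta(k_1)\cdots\zeta(k_r)$ with each $k_i \geq 2$. Since $\zeta(2k) \in \mathbb{Q}\cdot\pi^{2k}$, such a combination lies in $\mathcal{Z}$, and equating with the coefficient found above gives the desired explicit closed form for $i_{mn01}$.

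I expect no genuine obstacle. The only delicate point is the simple pole at $a=0$ present on both sides (from $\int_0^1 x^{a-1}\, dx$ on the LHS and the explicit $1/a$ on the RHS); this pole lives entirely in the $m=0$ row, which corresponds to the divergent integral $\int_0^1 \log^n x / x\, dx$ and is therefore irrelevant to the statement. The rest is a bookkeeping exercise — for any fixed $(n,m)$ only finitely many monomials contribute — which I would not attempt to grind through in a plan.
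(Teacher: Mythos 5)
Your proposal is correct and follows essentially the same route as the paper: expand Euler's beta identity around $(a,b)=(0,1)$, read off the coefficient of $a^n(b-1)^m$ on the left as the integral and on the right via the $\log\Gamma(1+z)$ series (with the $1/a$ factored out to handle the pole), yielding a finite computation in $\mathbb{Q}[\pi^2,\zeta(3),\zeta(5),\dots]$. Your explicit reduction $i_{nm00}=i_{mn01}$ via $x\mapsto 1-x$ and the remark on the cancellation of the Euler--Mascheroni terms make precise what the paper leaves implicit.
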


Now consider the expansion of $(1)$ around $a=1/2, b=0$, the coefficient of $a^n (b-1)^m$ of LHS is (assuming $n\geq 1$):
$$\frac{1}{n!m!}\int_0^1 {{x^{ - 1/2}}\frac{{{{\log }^n}x{{\log }^m}(1 - x)}}{{1 - x}}dx} $$
Using the duplication formula $\Gamma(z)\Gamma(z+1/2) = 2^{1-2z}\sqrt{\pi}\Gamma(2z)$, one readily obtain the expansion of $\log\Gamma(z)$ at $z=1/2$:
$$\log \Gamma (z) = \log \Gamma (z) = \frac{{\log \pi }}{2} - \gamma (z - \frac{1}{2}) + (1 - 2z)\log 2 + \sum\limits_{k = 2}^\infty  {\frac{{{{( - 1)}^k}\zeta (k)({2^k} - 1)}}{k}{{(z - \frac{1}{2})}^k}} $$
Therefore the RHS of $(1)$ becomes
$$\frac{1}{b}\exp \left( {2b\log 2 + \sum\limits_{k = 2}^\infty  {\frac{{{{( - 1)}^k}\zeta (k)}}{k}{b^k}}  + \sum\limits_{k = 2}^\infty  {\frac{{{{( - 1)}^k}\zeta (k)({2^k} - 1)}}{k}{{(a - \frac{1}{2})}^k}}  - \sum\limits_{k = 2}^\infty  {\frac{{{{( - 1)}^k}\zeta (k)({2^k} - 1)}}{k}{{(a + b - \frac{1}{2})}^k}} } \right)$$
the coefficient of $a^n (b-1)^m$ in $(2)$ can be figured out using finitely many calculations. Therefore 
\begin{lemma}\label{Gamma2}
The value of $$\int_0^1 {{x^{ - 1/2}}\frac{{{{\log }^n}x{{\log }^m}(1 - x)}}{{1 - x}}dx} $$ when $n\geq 1$ is in the algebra over $\mathbb{Q}$ generated by $\log 2$, $\pi^2$ and $\zeta(n)$, $n=3,5,7,\cdots$.
\end{lemma}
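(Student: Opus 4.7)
The plan mirrors the proof of Lemma~\ref{Gamma1}: I would expand both sides of the Beta identity $(1)$ as a formal series in the two variables $(a-\tfrac12)$ and $b$, then match the coefficient of $(a-\tfrac12)^n b^m$. On the left-hand side, differentiating $x^{a-1}(1-x)^{b-1}$ $n$ times in $a$ and $m$ times in $b$ under the integral sign and evaluating at $(\tfrac12,0)$ produces
$$\frac{1}{n!\,m!}\int_0^1 x^{-1/2}\frac{\log^n x\,\log^m(1-x)}{1-x}\,dx$$
as that coefficient; the hypothesis $n\geq 1$ is exactly what is needed to tame the $(1-x)^{-1}$ singularity at $x=1$, since $\log^n x$ vanishes to order $n$ there.

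For the right-hand side, I would first justify the expansion of $\log\Gamma(z)$ at $z=\tfrac12$ quoted in the excerpt. Substituting $z=\tfrac12+w$ in the duplication formula $\Gamma(z)\Gamma(z+\tfrac12)=2^{1-2z}\sqrt\pi\,\Gamma(2z)$ yields
$$\log\Gamma(\tfrac12+w)=\tfrac12\log\pi-2w\log 2+\log\Gamma(1+2w)-\log\Gamma(1+w),$$
into which I plug the classical series $\log\Gamma(1+z)=-\gamma z+\sum_{k\geq 2}\tfrac{(-1)^k\zeta(k)}{k}z^k$; collecting order-$w^k$ terms gives the factor $(2^k-1)$ from $(2w)^k-w^k$ and recovers precisely the displayed expansion.

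Feeding this into $\log\bigl[\Gamma(a)\Gamma(b)/\Gamma(a+b)\bigr]=-\log b+\log\Gamma(a)+\log\Gamma(1+b)-\log\Gamma(a+b)$, the $\tfrac12\log\pi$ and $\gamma$ contributions cancel cleanly in the difference, leaving exactly the exponential expression displayed immediately above the lemma. Expanding $\exp(\cdot)$ by its power series and $(a+b-\tfrac12)^k=((a-\tfrac12)+b)^k$ by the binomial theorem, the coefficient of $(a-\tfrac12)^n b^{m+1}$ in the analytic bracket is a finite $\mathbb{Q}$-linear combination of monomials in $\log 2$ and in $\zeta(2),\zeta(3),\zeta(4),\ldots$. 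Since $\zeta(2k)\in\mathbb{Q}\pi^{2k}$, this coefficient lies in $\mathbb{Q}[\log 2,\pi^2,\zeta(3),\zeta(5),\ldots]$; equating with the LHS and multiplying through by $n!\,m!$ gives the lemma.

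No serious conceptual obstacle is anticipated---the argument is mechanical once the $\log\Gamma$ expansion is in hand. The only bookkeeping subtlety is the index shift forced by the explicit $1/b$ prefactor, so the coefficient of $(a-\tfrac12)^n b^m$ in the Beta ratio is read off from the coefficient of $(a-\tfrac12)^n b^{m+1}$ in the bracketed analytic factor. Verifying at the outset that $\gamma$ and $\log\pi$ really do cancel in the ratio $\Gamma(a)\Gamma(b)/\Gamma(a+b)$ is the one step worth explicit attention, as a stray $\gamma$ would fall outside the claimed algebra.
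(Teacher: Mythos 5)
Your proposal is correct and follows essentially the same route as the paper: expand the Beta identity around $(a,b)=(\tfrac12,0)$, derive the $\log\Gamma$ expansion at $\tfrac12$ from the duplication formula, observe that $\gamma$ and $\log\pi$ cancel in the ratio, and read off the coefficient of $(a-\tfrac12)^n b^{m+1}$ in the exponential factor. Your explicit attention to the $1/b$ index shift and to why $n\geq 1$ ensures convergence at $x=1$ are points the paper leaves implicit, but the argument is the same.
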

~\\[0.01in]

\subsection{Square replacement}
We begin with an example,
$$\int_0^1 {\frac{{{{\log }^3}(1 + x)}}{x}dx}  + 3\int_0^1 {\frac{{{{\log }^2}(1 + x)\log (1 - x)}}{x}dx}  + 3\int_0^1 {\frac{{\log (1 + x){{\log }^2}(1 - x)}}{x}dx}  + \int_0^1 {\frac{{{{\log }^3}(1 - x)}}{x}dx}  = \int_0^1 {\frac{{{{\log }^3}(1 - {x^2})}}{x}dx} $$
making substitution $u=x^2$ in the last integral gives the relation
$${i_{0031}} + 3{i_{1021}} + 3{i_{2011}} + {i_{3001}} = \frac{1}{2}{i_{3001}}$$

Similar process can be carried out on $i_{ab01}$, with
$$\int_0^1 {\frac{{{{\log }^b}x{{\left[ {\log (1 + x) + \log (1 - x)} \right]}^a}}}{x}dx}  = \frac{1}{{{2^{b + 1}}}}\int_0^1 {\frac{{{{\log }^b}x{{\log }^a}(1 - x)}}{x}dx}$$
Expand the LHS, it involves multiple $i_{a'bc'1}$, RHS can be calculated using $\zeta(n)$ by Lemma . For weight $w$ polylogarithm integral, $b$ can ranges from $0$ to $w-2$, giving $w-1$ relations.\\

There is a second procedure, consider
$$\int_0^1 {\frac{{{{\log }^b}x{{\left( {\log (1 - x) + \log (1 + x)} \right)}^a}}}{{1 - x}}dx}  = \frac{1}{{{2^{b + 1}}}}\int_0^1 {{x^{ - 1/2}}\frac{{{{\log }^b}x{{\log }^a}(1 - x)}}{{1 - {x^{1/2}}}}dx} $$
Using $\frac{{{x^{ - 1/2}}}}{{1 - {x^{1/2}}}} = \frac{1}{{1 - x}} + \frac{{{x^{ - 1/2}}}}{{1 - x}}$, we have
$$\int_0^1 {\frac{{{{\log }^b}x{{\left( {\log (1 - x) + \log (1 + x)} \right)}^a}}}{{1 - x}}dx}  = \frac{1}{{{2^{b + 1}}}}\int_0^1 {\frac{{{{\log }^b}x{{\log }^a}(1 - x)}}{{1 - x}}dx}  + \frac{1}{{{2^{b + 1}}}}\int_0^1 {{x^{ - 1/2}}\frac{{{{\log }^b}x{{\log }^a}(1 - x)}}{{1 - x}}dx}$$
Both integrals on the right can be calculated via Lemmas \ref{Gamma1} and \ref{Gamma2}. Expanding the left side again yield a relation, $b$ here can ranges from $1$ to $w-1$, again giving $w-1$ relations. \par 
Empirically, this method contributes $2w-4$ relations for large weight $w$. 
~\\[0.01in]

\subsection{Contour integration}\label{CIsec}
The following scheme of obtaining relations starts to exert its power when weight $\geq 6$. Let $\log z$ denote the principal branch logarithm, assuming $q,r\geq 1$, we integrate
$$\frac{{{{\log }^p}(1 + z){{\log }^q}(1 - z){{\log }^r}z}}{{z(1 - z)}}$$
around a large semicircle with diameter lying on real axis. The integral along semicircle tends to $0$. Integrals along real axis can be broken piece-wisely into:
$$\begin{aligned}
&\text{Along } [0,1]: \int_0^1 {\frac{{{{\log }^p}(1 + x){{\log }^q}(1 - x){{\log }^r}x}}{{x(1 - x)}}dx} \\
&\text{Along } [1,\infty): \int_1^\infty  {\frac{{{{\log }^p}(1 + x){{\left[ {\log (x - 1) - \pi i} \right]}^q}{{\log }^r}x}}{{x(1 - x)}}dx} \\& \qquad\qquad \qquad =  - \int_0^1 {\frac{{{{\left[ {\log (1 + x) - \log x} \right]}^p}{{\left[ {\log (1 - x) - \log x - \pi i} \right]}^q}{{( - 1)}^r}{{\log }^r}x}}{{1 - x}}dx}  \\
&\text{Along } [-1,0]: \int_{ - 1}^0 {\frac{{{{\log }^p}(1 + x){{\log }^q}(1 - x){{\left[ {\log ( - x) + \pi i} \right]}^r}}}{{x(1 - x)}}dx}  =  - \int_0^1 {\frac{{{{\log }^p}(1 - x){{\log }^q}(1 + x){{\left[ {\log x + \pi i} \right]}^r}}}{{x(1 + x)}}dx} \\
&\text{Along } (-\infty,-1]: \int_{ - \infty }^{ - 1} {\frac{{{{\left[ {\log ( - 1 - x) + \pi i} \right]}^p}{{\log }^q}(1 - x){{\left[ {\log ( - x) + \pi i} \right]}^r}}}{{x(1 - x)}}dx} \\& \qquad\qquad \qquad =  - \int_0^1 {\frac{{{{\left[ {\log (1 - x) - \log x + \pi i} \right]}^p}{{\left[ {\log (1 + x) - \log x} \right]}^q}{{\left[ { - \log x + \pi i} \right]}^r}}}{{1 + x}}dx} 
\end{aligned}$$

The four integrals add up to $0$. When $p+q+r = w-1$, comparing the \textit{real part} gives a relation between weight $w$ polylogarithmic integrals; when $p+q+r = w$, comparing \textit{imaginary part} also gives a relation between weight $w$ polylogarithmic integrals. \\
Note that the relation we obtained is of form
$$\text{sum of weight }w \text{ integrals} = \sum \pi^{2i} (\text{sum of weight }w-2i \text{ integrals})$$ 

When $w=6$, the constant $\pi^2 \Li_4(1/2)$, which cannot be generated from methods in previous sections, can now arise from RHS. Therefore it would be expected this method engenders new relations when $w\geq 6$.
~\\[0.01in]

\subsection{Multiple integral}\label{MIsec}
The following method gives two new relations when weight $w$ is even, and one when $w$ is odd. It starts to take effect when $w\geq 7$. 
\begin{lemma} When $n\geq 1$ an integer,
$$\label{MI1}\int_0^1 {\int_0^1 {\frac{{{{\log }^{2n}}(\frac{{1 - y}}{{1 - x}})}}{{(1 + x)(1 + y)}}dxdy} }  = 2\int_0^1 {\frac{{{{\log }^{2n}}(1 - u)\log (1 + u)}}{u}du}  + \frac{2}{{2n + 1}}\int_0^1 {\frac{{{{\log }^{2n + 1}}(1 - u)}}{{1 + u}}du} $$
\end{lemma}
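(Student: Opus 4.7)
My plan is to show that both sides equal $2(2n)!\int_0^{1/2} \Li_{2n+1}(u)/(1-u)\,du$.

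For the LHS, the integrand is symmetric under $x \leftrightarrow y$ (since $\log^{2n}$ is an even power), so the integral equals twice its restriction to $\{y \geq x\}$. Fixing $x$ and substituting $t = (1-y)/(1-x)$ in the inner $y$-integral (which maps $y \in [x,1]$ bijectively onto $t \in [0,1]$, reversing orientation) produces $1+y = 2(1-at)$ with $a = (1-x)/2$. Expanding $1/(1-at)$ as a geometric series and using $\int_0^1 t^k \log^{2n} t\,dt = (2n)!/(k+1)^{2n+1}$ collapses the inner integral to $(2n)!\,\Li_{2n+1}((1-x)/2)$. Substituting $u = (1-x)/2$ in the remaining $x$-integral then yields $\text{LHS} = 2(2n)! \int_0^{1/2} \Li_{2n+1}(u)/(1-u)\,du$.

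For the RHS, write $B = \int_0^1 \log^{2n}(1-u)\log(1+u)/u\,du$. The key move is the identity $\log(1+u)/u = \int_0^1 dt/(1+ut)$, which turns $B$ into a double integral; swapping order and substituting $v = 1-u$ inside, the inner integral becomes $\int_0^1 \log^{2n} v/((1+t) - vt)\,dv$, which via geometric expansion equals $(2n)!\,\Li_{2n+1}(t/(1+t))/t$. Now substitute $s = t/(1+t)$ (so $dt/t = ds/(s(1-s))$ and $s \in [0,1/2]$); this gives $B = (2n)! \int_0^{1/2} \Li_{2n+1}(s)/(s(1-s))\,ds$. Splitting $1/(s(1-s)) = 1/s + 1/(1-s)$ and using $\Li_{2n+2}'(s) = \Li_{2n+1}(s)/s$ yields $B = (2n)!\,\Li_{2n+2}(1/2) + (2n)! \int_0^{1/2} \Li_{2n+1}(s)/(1-s)\,ds$. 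Combined with the earlier lemma $\int_0^1 \log^{2n+1}(1-u)/(1+u)\,du = -(2n+1)!\,\Li_{2n+2}(1/2)$, the $\Li_{2n+2}(1/2)$ pieces cancel in the RHS, leaving $2(2n)! \int_0^{1/2} \Li_{2n+1}(s)/(1-s)\,ds$, which matches the LHS.

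The main obstacle is the evaluation of $B$: the substitution $s = t/(1+t)$, applied after introducing the auxiliary parameter $t$ via $\log(1+u)/u = \int_0^1 dt/(1+ut)$, is what produces precisely the $\Li_{2n+1}/(1-s)$ structure needed to match the LHS, with the excess $\Li_{2n+2}(1/2)$ term neatly cancelled by the second integral on the RHS. An alternative (longer) route would expand $\log(1+u)/u$ as a power series and invoke the Euler-sum identity $\sum_{k \geq 1} H_k^{(p)} x^k/k = \Li_{p+1}(x) + \int_0^x \Li_p(t)/(1-t)\,dt$ at $x = 1/2$, $p = 2n+1$.
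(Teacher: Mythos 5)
Your proof is correct, but it follows a genuinely different route from the paper's. The paper substitutes $u=(1-y)/(1-x)$ directly in the double integral over the whole square, then exchanges the order of integration via Fubini; this forces a split of the $u$-range into $[0,1]$ and $[1,\infty)$, after which the inner $x$-integrals are evaluated in elementary closed form and recombined into the stated single integrals (the vanishing for odd powers falls out of the same computation). You instead exploit the $x\leftrightarrow y$ symmetry of the even-power integrand to halve the domain, collapse the inner integral to $(2n)!\operatorname{Li}_{2n+1}((1-x)/2)$ by geometric expansion, and reduce the left side to the clean closed form $2(2n)!\int_0^{1/2}\operatorname{Li}_{2n+1}(u)/(1-u)\,du$; you then must evaluate the right side independently (via the representation $\log(1+u)/u=\int_0^1 dt/(1+ut)$, the substitution $s=t/(1+t)$, and the paper's Lemma 2.2 for $\int_0^1\log^{2n+1}(1-u)/(1+u)\,du$) and verify that the $\operatorname{Li}_{2n+2}(1/2)$ terms cancel. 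All of these steps check out (in particular $\int_0^1 t^k\log^{2n}t\,dt=(2n)!/(k+1)^{2n+1}$ and $dt/t=ds/(s(1-s))$ are right). What the paper's approach buys is that it transforms the left side \emph{into} the right side without ever needing a common closed form, at the cost of the awkward domain split; what yours buys is a symmetric, split-free treatment and an explicit intermediate identity expressing both sides as $2(2n)!\int_0^{1/2}\operatorname{Li}_{2n+1}(u)/(1-u)\,du$, at the cost of a second, independent computation for the right-hand side.
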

\begin{proof}
Start with the LHS $$I = \int_0^1 {\int_0^1 {\frac{{{{\log }^n}(\frac{{1 - y}}{{1 - x}})}}{{(1 + x)(1 + y)}}dxdy} } $$ replace $y$ by $u = \frac{{1 - y}}{{1 - x}}$, 
gives $$I = \int_0^1 {\int_0^{\frac{1}{{1 - x}}} {\frac{{{{\log }^n}u}}{{(1 + x)(2 - u + ux)}}(1 - x)dudx} } $$ now exchange order of integration (easily justified via Fubini's theorem), yields
$$\begin{aligned}
I &= \int_0^1 {\int_0^1 {\frac{{{{\log }^n}u}}{{(1 + x)(2 - u + ux)}}(1 - x)dxdu} }  + \int_1^\infty  {\int_{(u - 1)/u}^1 {\frac{{{{\log }^n}u}}{{(1 + x)(2 - u + ux)}}(1 - x)dxdu} } \\
 &= \int_0^1 {\frac{{{{\log }^n}u}}{{u(1 - u)}}\left[ {u\log 2 + \log (\frac{{2 - u}}{2})} \right]du}  + \int_1^\infty  {{{\log }^n}u\left( { - \frac{{\log 2}}{u} - \frac{{\log (2 - 1/u)}}{{1 - u}}} \right)du} \\
 &= \int_0^1 {\frac{{{{\log }^n}u}}{{u(1 - u)}}\left[ {u\log 2 + \log (\frac{{2 - u}}{2})} \right]du}  + {( - 1)^n}\int_0^1 {{{\log }^n}u\left( { - u\log 2 - \frac{{\log (2 - u)}}{{1 - 1/u}}} \right)\frac{1}{{{u^2}}}du}\end{aligned} $$
 Rewrite RHS in the form of polylogarithm integrals, when $n$ is odd, it vanishes (which can also be observed from the appearance of $I$ as double integral); when $n$ is even, it reduces to the form mentioned in the theorem.
\end{proof}

Dismantle the double integral by expanding $(\log(1-y)-\log(1-x))^{2n}$, by Lemma \ref{Gamma1}, we know that $i_{2n,0,1,1}$ can be always be expressed in terms of polylogarithm, for example:
\begin{multline*}
i_{6011} = \int_0^1 \frac{\log^6 (1-x)\log(1+x)}{x} dx = 630 \text{Li}_5\left(\frac{1}{2}\right) \zeta (3)-360 \text{Li}_4\left(\frac{1}{2}\right){}^2-60 \pi ^2 \text{Li}_6\left(\frac{1}{2}\right)+\\ 720 \text{Li}_8\left(\frac{1}{2}\right)+120 \text{Li}_5\left(\frac{1}{2}\right) \log ^3(2)+360 \text{Li}_6\left(\frac{1}{2}\right) \log ^2(2)-60 \pi ^2 \text{Li}_5\left(\frac{1}{2}\right) \log (2)+720 \text{Li}_7\left(\frac{1}{2}\right) \log (2)\end{multline*}

The above lemma only contributes when $w$ is even, the following additional one is valid regardless of parity. Although intimidating-looking, it is a valuable high-order relation that can be written explicitly for arbitrary $n$:
\begin{lemma} When $n\geq 1$ an integer,
\begin{multline}\label{MI2}
(n + 1)\int_0^1 {\int_0^1 {\frac{{{{\log }^n}(\frac{{1 - y}}{{1 + x}})\log (1 - x)}}{{(1 + x)(1 + y)}}dxdy} }  = \int_0^1 {\frac{{{{\log }^{n + 1}}(\frac{{1 + u}}{2})\log (1 - u)}}{u}du}  - (n + 1)\int_0^1 {\frac{{{{\log }^n}(\frac{{1 + u}}{2})\log \left( {\frac{{1 + u}}{{2u}}} \right)\log (1 - u)}}{{1 + u}}du} \\ + \frac{{n + 1}}{2}\int_0^1 {\frac{{{{\log }^2}(1 - u){{\log }^n}(\frac{{1 + u}}{2})}}{{1 + u}}du}  + \int_0^1 {\frac{{{{\log }^{n + 1}}(1 - u)}}{u}\log \left( {\frac{{1 + u}}{{2u}}} \right)du}  - \frac{1}{{n + 2}}\int_0^1 {\frac{{{{\log }^{n + 2}}(1 - u)}}{{u(u + 1)}}du}
\end{multline} Note that RHS, after expansion, can be cast into linear combination of standard polylogarithm integrals.
\end{lemma}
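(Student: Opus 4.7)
The plan is to mimic the preceding lemma: substitute in the inner variable, apply Fubini, and evaluate the resulting single integral. Specifically, set $u = (1-y)/(1+x)$ in the $y$-integration, so that $dy = -(1+x)\,du$, $1+y = 2-u(1+x)$, and the LHS rewrites as
\[
(n+1)\int_0^1 \log(1-x)\int_0^{1/(1+x)} \frac{\log^n u}{2-u(1+x)}\,du\,dx.
\]
Interchange the order of integration by Fubini. Since $u \le 1/(1+x) \Leftrightarrow x \le 1/u-1$, the region splits at $u=1/2$, producing $(n+1)\int_0^{1/2}\log^n u \cdot I_1(u)\,du + (n+1)\int_{1/2}^1 \log^n u \cdot I_2(u)\,du$, where $I_1(u)$ and $I_2(u)$ denote the inner $x$-integrals over $[0,1]$ and $[0,1/u-1]$ respectively.

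Evaluate $I_1$ and $I_2$ explicitly: after substituting $s=1-x$ followed by $p = us/(2(1-u))$, the antiderivative of $\log(1-x)/(2-u(1+x))$ telescopes to $\tfrac{1}{u}[\log s \cdot \log(1+p)+\Li_2(-p)]$. Boundary evaluation gives $I_1(u) = \tfrac{1}{u}\Li_2\bigl(-\tfrac{u}{2(1-u)}\bigr)$, while $I_2(u)$ differs by an explicit logarithmic piece together with $-\tfrac{1}{u}\Li_2\bigl(-\tfrac{2u-1}{2(1-u)}\bigr)$.

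Eliminate these two dilogarithms as follows. For $\Li_2(-u/(2(1-u)))$, integrate by parts against $\log^n u\,du/u = d(\log^{n+1} u/(n+1))$; its derivative $-\log((2-u)/(2(1-u)))/[u(1-u)]$ together with the reflection $u\mapsto 1-u$ (which turns $\log((2-u)/(2(1-u)))$ into $\log((1+u)/(2u))$) yields $\int_0^1 \log^{n+1}(1-u)\log((1+u)/(2u))/[u(1-u)]\,du$. For $\Li_2(-(2u-1)/(2(1-u)))$, substitute $v = 2u-1$: $\log u \mapsto \log((1+v)/2)$, $\log(2(1-u))\mapsto\log(1-v)$, and the dilogarithm argument becomes $-v/(1-v)$. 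Apply Landen's identity $\Li_2(-v/(1-v)) = -\Li_2(v)-\tfrac{1}{2}\log^2(1-v)$; the residual $\Li_2(v)$ then dispatches via another integration by parts against $\log^n((1+v)/2)/(1+v)\,dv = d(\log^{n+1}((1+v)/2))/(n+1)$.

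Finally, expand $\log((1+u)/(2u)) = \log((1+u)/2)-\log u$ and $1/[u(1-u)] = 1/u+1/(1-u)$. The $1/(1-u)$ piece, after one last integration by parts using $\log^{n+1}(1-u)/(1-u)\,du = -d(\log^{n+2}(1-u))/(n+2)$, yields precisely the fifth RHS term $-\tfrac{1}{n+2}\int_0^1 \log^{n+2}(1-u)/[u(1+u)]\,du$, and the remaining pieces assemble into the other four RHS integrals. The main obstacle is the bookkeeping: ensuring each dilogarithm is eliminated by the appropriate integration by parts and regrouping the resulting log-products into the required $\log((1+u)/2)$ and $\log((1+u)/(2u))$ form. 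A numerical check for $n=1,2$ provides sanity verification.
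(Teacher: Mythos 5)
Your proposal is correct and follows essentially the same route as the paper: substitute $u=(1-y)/(1+x)$, apply Fubini with the split at $u=1/2$, evaluate the inner $x$-integrals in closed form via $\Li_2$, and remove the dilogarithms by integration by parts against $\log^n u\,du/u$ and $\log^n(\tfrac{1+v}{2})\,dv/(1+v)$. The only (minor) difference is that you dispatch the second dilogarithm explicitly via the substitution $v=2u-1$ and Landen's identity, where the paper instead integrates both dilogarithms by parts directly and observes that the leftover non-standard pieces combine; your version makes that ``miraculous'' combination explicit, and the bookkeeping checks out.
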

\begin{proof}
Replace $y$ by $u = \frac{{1 - y}}{{1 + x}}$, then exchange order of integration gives 
$$\int_0^1 {\int_0^1 {\frac{{{{\log }^n}(\frac{{1 - y}}{{1 + x}})\log (1 - x)}}{{(1 + x)(1 + y)}}dxdy} } = \int_0^{1/2} {\int_0^1 {\frac{{{{\log }^n}u\log (1 - x)}}{{2 - u - ux}}dxdu} }  + \int_{1/2}^1 {\int_0^{(1 - u)/u} {\frac{{{{\log }^n}u\log (1 - x)}}{{2 - u - ux}}dxdu} } $$
It is elementary to check that
$$\int_0^1 {\frac{{\log (1 - x)}}{{2 - u - ux}}dxdu}  = \frac{1}{u}\Li_2\left( {\frac{u}{{2(u - 1)}}} \right)$$ and
$$\int_0^{(1 - u)/u} {\frac{{{{\log }^n}u\log (1 - x)}}{{2 - u - ux}}dxdu}  = \frac{1}{u}\left( { - \log (2 - 2u)\log \frac{u}{{ - 1 + 2u}} - \Li_2\left( {\frac{{1 - 2u}}{{2\left( {1 - u} \right)}}} \right) + \Li_2\left( {\frac{u}{{2\left( { - 1 + u} \right)}}} \right)} \right)$$
therefore the double integral is
$$\int_0^{1/2} {\frac{{{{\log }^n}u}}{u}\Li_2\left( {\frac{u}{{2(u - 1)}}} \right)}  + \int_{1/2}^1 {\frac{{{{\log }^n}u}}{u}\left( { - \log (2 - 2u)\log \frac{u}{{ - 1 + 2u}} - \Li_2\left( {\frac{{1 - 2u}}{{2\left( {1 - u} \right)}}} \right) + \Li_2\left( {\frac{u}{{2\left( { - 1 + u} \right)}}} \right)} \right)} $$
this equals, via integration by parts:
$$ - \int_{1/2}^1 {\frac{{{{\log }^n}u}}{u}\log (2 - 2u)\log \frac{u}{{ - 1 + 2u}}}  + \frac{1}{{n + 1}}\int_0^1 {\frac{{{{\log }^{n + 1}}u}}{{u(1 - u)}}\log \frac{{2 - u}}{{2\left( {1 - u} \right)}}du}  - \frac{1}{{n + 1}}\int_{1/2}^1 {\frac{{\log \frac{1}{{2 - 2u}}}}{{\left( {1 - u} \right)\left( { - 1 + 2u} \right)}}{{\log }^{n + 1}}u} $$
the second integral is amenable into standard polylogarithm integral, while first and third are not so individually, but they miraculous combine into standard forms, giving the RHS of the lemma.
\end{proof}
When $w=7$, this relation makes $\Li_4(1/2)\zeta(3)$ to the party, which is not otherwise obtainable, therefore this relation is expected independent to previous ones.
~\\[0.01in]

\subsection{Hypergeometric $_2F_1$}
The following method provides new relation when $w\geq 9$, it introduces new combination of constants by multiplying them with $\zeta(n)$ for various $n$.

\begin{lemma}
The following holds when $(a,b,c)$ is near $(1,1,2)$:
\begin{multline}\label{2F1eq}
\sin \pi (b - a)\int_0^1 {{x^{b - 1}}{{(1 - x)}^{c - b - 1}}{{(1 + x)}^{ - a}}dx}  = \sin \pi (c - a)\int_0^1 {{x^{a - c}}\left[ {{{(1 - x)}^{c - b - 1}}{{(1 + x)}^{ - a}} - 1} \right]dx}  - \\ \sin \pi (b - a)\frac{{\Gamma (c - b)\Gamma (b)}}{{\Gamma (a)\Gamma (c - a)}}\int_0^1 {{x^{b - c}}\left[ {{{(1 - x)}^{c - a - 1}}{{(1 + x)}^{ - b}} - 1} \right]dx}  + \frac{{\sin \pi (c - a)}}{{a - c + 1}} - \frac{{\pi \Gamma (b)}}{{\Gamma (2 - c + b)\Gamma (a)\Gamma (c - a)}}\end{multline}
Note that each term is analytic near $(1,1,2)$.
\end{lemma}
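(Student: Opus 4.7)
My strategy is to realise each of the three integrals appearing in the statement as an Euler integral representation of Gauss' ${}_2F_1$ at the special argument $z=-1$, and then to invoke the classical $z\leftrightarrow 1/z$ connection formula for the hypergeometric function. The Euler integral
$$\int_0^1 t^{B-1}(1-t)^{C-B-1}(1+t)^{-A}\,dt \;=\; \frac{\Gamma(B)\Gamma(C-B)}{\Gamma(C)}\,{}_2F_1(A,B;C;-1)$$
identifies the LHS integral as $B(b,c-b)\,F_1$, where $F_1={}_2F_1(a,b;c;-1)$. For the two regularised integrals on the RHS, I would add back the subtracted terms $\int_0^1 x^{a-c}dx = 1/(a-c+1)$ and $\int_0^1 x^{b-c}dx = 1/(b-c+1)$ so that each becomes, up to a Beta-function prefactor, the Euler representation of $F_2={}_2F_1(a,a-c+1;a-b+1;-1)$ and $F_3={}_2F_1(b,b-c+1;b-a+1;-1)$ respectively. (Matching parameters: with $B=a-c+1,\,C-B=c-b,\,A=a$ one indeed gets $F_2$; and similarly for $F_3$.)

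Next I would apply the classical connection formula DLMF 15.10.21, which at $z=-1$ (so $(-z)^{-a}=(-z)^{-b}=1$ and $1/z=-1$) reads
$$F_1 \;=\; \frac{\Gamma(c)\Gamma(b-a)}{\Gamma(b)\Gamma(c-a)}\,F_2 \;+\; \frac{\Gamma(c)\Gamma(a-b)}{\Gamma(a)\Gamma(c-b)}\,F_3.$$
Substituting the integral expressions for each $F_i$, multiplying through by $B(b,c-b)=\Gamma(b)\Gamma(c-b)/\Gamma(c)$, and simplifying the prefactors using $B(p,q)=\Gamma(p)\Gamma(q)/\Gamma(p+q)$, the resulting ratios collapse to products of the form $\Gamma(b-a)\Gamma(a-b+1)$, $\Gamma(c-a)\Gamma(a-c+1)$, $\Gamma(a-b)\Gamma(b-a+1)$, each of which is converted via the reflection formula $\Gamma(s)\Gamma(1-s)=\pi/\sin\pi s$ into $\pm\pi/\sin\pi(\cdot)$. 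After clearing denominators by multiplying the whole identity by $\sin\pi(b-a)$, the singularity at $b=a$ disappears and one obtains exactly the six-term identity of the lemma: the integrals $I_2,\,I_3$ pick up the coefficients $\sin\pi(c-a)$ and a $\Gamma$-ratio times $\sin\pi(b-a)$, while the constants $1/(a-c+1)$ and $1/(b-c+1)$ from the regularisations turn into the explicit closing terms $\sin\pi(c-a)/(a-c+1)$ and $\pi\Gamma(b)/[\Gamma(2-c+b)\Gamma(a)\Gamma(c-a)]$.

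The main obstacle is the final analyticity check near $(a,b,c)=(1,1,2)$. At this point the parameters are in multiple resonances simultaneously: $b-a=0$, $c-a-1=0$, $c-b-1=0$, and $a-c+1=0$, so a priori each of $\sin\pi(b-a)$, $\Gamma(b-a)$, $\Gamma(a-b)$, and $1/(a-c+1)$ is degenerate. One must verify that the bracketed subtractions $(1-x)^{c-b-1}(1+x)^{-a}-1$ and $(1-x)^{c-a-1}(1+x)^{-b}-1$ in $I_2,I_3$ vanish to the correct order at $x=0$ so that each integral converges there, and that the term $\sin\pi(c-a)/(a-c+1)$ is a genuine $0/0$ with a finite limit because $\sin\pi(c-a) = -\sin\pi(c-a-1)$ vanishes to the same order as $-(a-c+1)$. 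A secondary bookkeeping challenge lies in carefully tracking the signs in the passage $\Gamma(a-b)\Gamma(b-a+1)=-\pi/\sin\pi(b-a)$ and in matching the prefactor $\Gamma(c-b)\Gamma(b)/[\Gamma(a)\Gamma(c-a)]$ on the $I_3$ term by combining the remnant $1/\Gamma(b-c+1)$ with an application of $\Gamma(c-b)\Gamma(b-c+1)=\pi/\sin\pi(c-b)$; this is routine but error-prone, and is where any sign slip would become visible.
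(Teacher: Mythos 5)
Your plan is exactly the paper's proof: the author quotes the Kummer connection formula relating ${}_2F_1(a,b;c;z)$ to the two solutions in the variable $1/z$, sets $z=-1$, and substitutes the Euler integral representation for each of the three hypergeometric functions; your proposal fills in precisely these steps, including the regularisation of the two Euler integrals that diverge at $x=0$ near $(1,1,2)$ and the reflection-formula bookkeeping that the paper leaves entirely implicit. One concrete point, at exactly the step you flagged as error-prone: carrying out your own identity $\Gamma(c-b)\Gamma(b-c+1)=\pi/\sin\pi(c-b)$ gives the coefficient $-\sin\pi(c-b)\,\Gamma(c-b)\Gamma(b)/\bigl[\Gamma(a)\Gamma(c-a)\bigr]$ on the third integral, \emph{not} the printed $-\sin\pi(b-a)\,\Gamma(c-b)\Gamma(b)/\bigl[\Gamma(a)\Gamma(c-a)\bigr]$; both factors vanish at $(1,1,2)$ but they differ to first order (on the line $a=b=1$, $c=2+\epsilon$ the printed right-hand side evaluates to $\pi 2^{\epsilon}-\epsilon^{-1}\sin\pi\epsilon\approx\pi\epsilon\log 2\neq 0$ while the left-hand side is identically zero, whereas the $\sin\pi(c-b)$ version checks out). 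So your derivation is sound, but the claim that it lands ``exactly'' on the stated six-term identity is not literally correct: what it proves is the statement with $\sin\pi(c-b)$ in that slot, and the printed $\sin\pi(b-a)$ appears to be a typo in the lemma. Your treatment of the remaining analyticity issues (the removable $0/0$ in $\sin\pi(c-a)/(a-c+1)$ and the first-order vanishing of the bracketed integrands at $x=0$) is correct.
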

\begin{proof}
Begin with the following identity, which relates three of Kummer's 24 solutions \cite{slater}, valid for $|\arg(-z)|<\pi$:
\begin{multline*}
\frac{{\Gamma (a)\Gamma (b)}}{{\Gamma (c)}}{_2F_1}(a,b;c;z) = \frac{{\Gamma (a)\Gamma (b - a)}}{{\Gamma (c - a)}}{( - z)^{ - a}}{_2F_1}(a,1 + a - c;1 + a - b;{z^{ - 1}}) \\ +  \frac{{\Gamma (b)\Gamma (a - b)}}{{\Gamma (c - b)}}{( - z)^{ - b}}{_2F_1}(b,1 + b - c;1 + b - a;{z^{ - 1}})\end{multline*}
Plug in $z=-1$ and the integral representation of ${_2F_1}$ gives the result.
\end{proof}

To obtain a relation between weight $w$ integrals, we apply $$\frac{\partial }{{\partial {a^m}\partial {b^n}\partial {c^r}}}$$ on both sides of (\ref{2F1eq}), with $m+n+r = w$, then plug in $a=1,b=1,c=2$. By varying $m,n,r$, we could obtain plenty of new relations when $w\geq 9$.\par 

It is an empirical curiosity that, $w$ is odd, this method subsumed both contour integration (Section \ref{CIsec}) and multiple integral (Section \ref{MIsec}); but not so when $w$ is even.

~\\[0.02in]
\section{Explicit evaluation}
\begin{table}[t]
\captionsetup{font=scriptsize}
\centering
\begin{tabular}{l|lllllllc}
\cline{2-9}
\multicolumn{1}{c|}{Weight $w$} & \multicolumn{1}{c|}{IBP} & \multicolumn{1}{c|}{FT} & \multicolumn{1}{c|}{GP} & \multicolumn{1}{c|}{SR} & \multicolumn{1}{c|}{CI} & \multicolumn{1}{c|}{MI} & \multicolumn{1}{c|}{2F1} & \multicolumn{1}{c|}{Total $ (3w^2+w-2)/2$} \\ \hline
\multicolumn{1}{|l|}{3} & 6 & 10 & 14 & - & - & - & - & 14 \\ \cline{1-1}
\multicolumn{1}{|l|}{4} & 10 & 17 & 22 & 25 & - & - & - & 25 \\ \cline{1-1}
\multicolumn{1}{|l|}{5} & 15 & 27 & 33 & 39 & - & - & - & 39 \\ \cline{1-1}
\multicolumn{1}{|l|}{6} & 21 & 38 & 45 & 53 & 55 & 55 & 55 & 56 \\ \cline{1-1}
\multicolumn{1}{|l|}{7} & 28 & 52 & 60 & 70 & 73 & 74 & 74 & 76 \\ \cline{1-1}
\multicolumn{1}{|l|}{8} & 36 & 67 & 76 & 88 & 94 & 96 & 96 & 99 \\ \cline{1-1}
\multicolumn{1}{|l|}{9} & 45 & 85 & 95 & 109 & 118 & 119 & 120 & 125 \\ \cline{1-1}
\multicolumn{1}{|l|}{10} & 55 & 104 & 115 & 131 & 144 & 146 & 148 & 154 \\ \cline{1-1}
\multicolumn{1}{|l|}{11} & 66 & 126 & 138 & 156 & 172 & 173 & 177 & 186 \\ \cline{1-1}
\multicolumn{1}{|l|}{12} & 78 & 149 & 162 & 182 & 203 & 205 & 210 & 221 \\ \cline{1-1}
\multicolumn{1}{|l|}{13} & 91 & 175 & 189 & 211 & 237 & 238 & 245 & 259 \\ \cline{1-1}
\multicolumn{1}{|l|}{14} & 105 & 202 & 217 & 241 & 273 & 275 & 284 & 300 \\ \cline{1-1}
\multicolumn{1}{|l|}{15} & 120 & 232 & 248 & 274 & 311 & 312 & 324 & 344 \\ \cline{1-1}
\multicolumn{1}{|l|}{16} & 136 & 263 & 280 & 308 & 352 & 354 & 368 & 391 \\ \cline{1-1}
\multicolumn{1}{|l|}{17} & 153 & 297 & 315 & 345 & 396 & 397 & 414 & 441 \\ \cline{1-1}
\multicolumn{1}{|l|}{18} & 171 & 332 & 351 & 383 & 442 & 444 & 464 & 494 \\ \cline{1-1}
\multicolumn{1}{|l|}{19} & 190 & 370 & 390 & 424 & 490 & 491 & 515 & 550 \\ \cline{1-1}
\multicolumn{1}{|l|}{20} & 210 & 409 & 430 & 466 & 541 & 543 & 570 & 609 \\ \cline{1-1}
\end{tabular}
\caption{Number of independent relations obtained via different methods when they are adjoined \textit{successively}. (IBP: integration by parts; FT: fractional transformation; GP: gamma and polylogarithm; SR: square replacement; CI: contour integration; MI: multiple integral; 2F1: hypergeometric $_2F_1$; Total: number of polylogarithmic integrals of that weight)}
\label{table 1}
\end{table}

\subsection{Weight 3, 4 and 5}
We start from values of six weight $2$ integrals:
$$i_{0100} = -\frac{\pi^2}{6} \qquad i_{0011} = \frac{\pi^2}{12} \qquad i_{1001} = -\frac{\pi^2}{6}$$
$$i_{1002} = -\frac{\pi^2}{12} + \frac{\log^2 2}{2}\qquad i_{0012} = \frac{\log^2 2}{2} \qquad i_{0102} = -\frac{\pi^2}{12}$$
where we used the value of $\Li_2(1/2)$, and start deducing higher order polylogarithm integrals. Table \ref{table 1} indicates that all $14$ weight-$3$ integrals, $25$ weight-$4$ integrals and $39$ weight-$5$ integrals can be expressed in terms of Riemann zeta and $\Li_n(1/2)$. For example, $$\int_0^1 \frac{\log(1-x)\log(1+x)}{1+x}dx -\frac{1}{12} \pi^2 \log (2)+\frac{\zeta (3)}{8}+\frac{\log ^3(2)}{3}$$
$$\int_0^1 \frac{\log(1-x)\log x \log(1+x)}{x} dx = 2 \text{Li}_4\left(\frac{1}{2}\right)+\frac{7}{4} \zeta (3) \log (2)-\frac{3 \pi ^4}{160}+\frac{\log ^4(2)}{12}-\frac{1}{12} \pi ^2 \log ^2(2)$$
\begin{multline*}
\int_0^1 \frac{\log(1-x) \log^3(1+x)}{x} dx =-6 \text{Li}_5\left(\frac{1}{2}\right)-6 \text{Li}_4\left(\frac{1}{2}\right) \log (2)+\\ \frac{7 \pi ^2 \zeta (3)}{16}+\frac{3 \zeta (5)}{4}-\frac{21}{8} \zeta (3) \log ^2(2)-\frac{1}{5} \log ^5(2)+\frac{1}{6} \pi ^2 \log ^3(2)
\end{multline*}

The evaluation of such integrals of weight $\leq 5$ is well-known in literature, in an one-by-one manner. It might be pleasing to learn that these integrals can be calculated simultaneously without appealing to Euler sums or MZVs. 

\subsection{Weight 6}
This weight encounters the first constant which is probably not expressible using ordinary polylogarithm. Table \ref{table 1} shows one additional constant is needed, I choose it to be
$$\mathbf{F}_1 := \int_0^1 \frac{\log^4 x \log(1+x)}{1+x} dx = 24 \zeta(\bar{5},1) = 24\sum_{n=2}^\infty \frac{(-1)^n}{n^5}H_{n-1}\approx 0.633579571034807$$
All $56$ integrals can be expressed, for instance,
\begin{multline*}\int_0^1 \frac{\log x \log^4(1+x)}{1-x} dx = \frac{\textbf{F}_1}{2}+2 \pi ^2 \text{Li}_4\left(\frac{1}{2}\right)+48 \text{Li}_6\left(\frac{1}{2}\right)-12 \text{Li}_4\left(\frac{1}{2}\right) \log ^2(2)-\\ 24 \text{Li}_5\left(\frac{1}{2}\right) \log (2)-\frac{93 \zeta (3)^2}{16}+\frac{21}{2} \zeta (3) \log ^3(2)+93 \zeta (5) \log (2)- \\ \frac{9 \pi ^6}{140}-\frac{1}{30} 7 \log ^6(2)-\frac{1}{6} \pi ^2 \log ^4(2)-\frac{61}{120} \pi ^4 \log ^2(2)
\end{multline*}
\begin{multline*}
\int_0^1 \frac{\log(1-x) \log^2 x \log^2(1+x)}{1-x} dx = -\frac{4}{3} \pi ^2 \text{Li}_4\left(\frac{1}{2}\right)+\frac{131 \zeta (3)^2}{16}+\frac{7}{3} \zeta (3) \log ^3(2)- \frac{77}{24} \pi ^2 \zeta (3) \log (2)+\\ \frac{217}{8} \zeta (5) \log (2)+\frac{41 \pi ^6}{30240}-\frac{1}{18} \pi ^2 \log ^4(2)-\frac{1}{144} \pi ^4 \log ^2(2) 
\end{multline*}
\begin{multline*}
\int_0^1 \frac{\log^2 x \log^3(1+x)}{1+x} dx = \frac{\textbf{F}_1}{4}+24 \text{Li}_6\left(\frac{1}{2}\right)+12 \text{Li}_4\left(\frac{1}{2}\right) \log ^2(2)+ 24 \text{Li}_5\left(\frac{1}{2}\right) \log (2)-\\ 3 \zeta (3)^2+\frac{7}{2} \zeta (3) \log ^3(2)-\frac{53 \pi ^6}{2520}+\frac{\log ^6(2)}{3}-\frac{1}{4} \pi ^2 \log ^4(2)
\end{multline*}

The middle one being lacking $\mathbf{F}_1$ is curious, a more striking example occurs in weight $8$.\\

In general, $$\int_0^1 \frac{\log^k x \log(1+x)}{1+x} dx = k! \zeta(\overline{k+1},1) = k!\sum_{n=2}^\infty \frac{(-1)^n}{n^{k+1}}H_{n-1}$$ seems to define a new constant for even $k$, while for odd $k$, the corresponding Euler sum reduces to a combination of ordinary zeta values.\cite{Flajolet}.

\subsection{Weight 7}
There are two constants according to Table \ref{table 1}, I chose them to be
$$\begin{aligned}\mathbf{G}_1 &= \int_0^1 \frac{\log^4 x \log^2(1+x)}{1+x} dx \\ \mathbf{G}_2 &= \int_0^1 \frac{\log(1-x)\log^4 x\log(1+x)}{1-x} dx \end{aligned}$$

Some examples are:
\begin{multline*}
\int_0^1 \frac{\log(1-x) \log^4 x \log (1+x)}{1+x} dx = \mathbf{F}_1 \log (2)-\mathbf{G}_1+48 \text{Li}_4\left(\frac{1}{2}\right) \zeta (3)-\frac{85 \pi ^4 \zeta (3)}{48}-\\ \frac{39 \pi ^2 \zeta (5)}{2}+\frac{5655 \zeta (7)}{16}+2 \zeta (3) \log ^4(2)-2 \pi ^2 \zeta (3) \log ^2(2)+\frac{93}{4} \zeta (5) \log ^2(2)+ \frac{195}{4} \zeta (3)^2 \log (2)-\frac{37}{840} \pi ^6 \log (2)
\end{multline*}
\begin{multline*}
\int_0^1 \frac{\log^2(1-x) \log x \log^3 (1+x)}{1+x} dx = -\frac{1}{2} \mathbf{F}_1 \log (2)+\frac{3 \textbf{G}_1}{8}-\frac{\textbf{G}_2}{4}+12 \text{Li}_4\left(\frac{1}{2}\right) \zeta (3)-72 \text{Li}_7\left(\frac{1}{2}\right)-\\ \pi ^2 \text{Li}_4\left(\frac{1}{2}\right) \log (2)-\frac{103 \pi ^4 \zeta (3)}{480}-\frac{405 \pi ^2 \zeta (5)}{64}+\frac{2385 \zeta (7)}{16}+\frac{23}{4} \zeta (3) \log ^4(2)-\frac{67}{16} \pi ^2 \zeta (3) \log ^2(2)+\\ \frac{1209}{16} \zeta (5) \log ^2(2)+\frac{1005}{32} \zeta (3)^2 \log (2)+\frac{\log ^7(2)}{70}-\frac{23}{120} \pi ^2 \log ^5(2)-\frac{29}{240} \pi ^4 \log ^3(2)-\frac{1069 \pi ^6 \log (2)}{10080}
\end{multline*}

Note that the only relation that introduces constant $\Li_4 (1/2) \zeta(3)$ on the RHS is Lemma \ref{MI2}. There seems no non-trivial nice evaluation available for odd weights.

\subsection{Weight 8}
The three constants I shall introduce are $$\begin{aligned}\mathbf{H}_1 &= \int_0^1 \frac{\log^6 x \log(1+x)}{1+x} dx \\ \mathbf{H}_2 &= \int_0^1 \frac{\log x \log^3 x\log^4(1+x)}{x} dx \\ \mathbf{H}_3 &= \int_0^1 \frac{\log^2 (1-x) \log^4 x\log (1+x)}{1+x} dx \end{aligned}$$

One remarkable evaluation, mentioned in at the beginning of the article:
\begin{theo}\label{remarkableweight8}
\begin{multline*}
\int_0^1 \frac{\log^2 (1-x) \log^2 x \log^3(1+x)}{x} dx = -168 \text{Li}_5\left(\frac{1}{2}\right) \zeta (3)+96 \text{Li}_4\left(\frac{1}{2}\right){}^2-\frac{19}{15} \pi ^4 \text{Li}_4\left(\frac{1}{2}\right)+\\ 12 \pi ^2 \text{Li}_6\left(\frac{1}{2}\right)+8 \text{Li}_4\left(\frac{1}{2}\right) \log ^4(2)-2 \pi ^2 \text{Li}_4\left(\frac{1}{2}\right) \log ^2(2)+12 \pi ^2 \text{Li}_5\left(\frac{1}{2}\right) \log (2)+\frac{87 \pi ^2 \zeta (3)^2}{16}+\\ \frac{447 \zeta (3) \zeta (5)}{16}+\frac{7}{5} \zeta (3) \log ^5(2)-\frac{7}{12} \pi ^2 \zeta (3) \log ^3(2)-\frac{133}{120} \pi ^4 \zeta (3) \log (2)-\frac{\pi ^8}{9600}+\frac{\log ^8(2)}{6}- \\ \frac{1}{6} \pi ^2 \log ^6(2)-\frac{1}{90} \pi ^4 \log ^4(2)+\frac{19}{360} \pi ^6 \log ^2(2)
\end{multline*}
\end{theo}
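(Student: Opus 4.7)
The plan is to deploy the systematic reduction scheme developed in Section 2 and summarised in Table \ref{table 1}. For weight $w=8$ there are $(3\cdot 64 + 8 - 2)/2 = 99$ polylogarithm integrals; applying integration by parts, fractional transformation, gamma/polylog evaluation, square replacement, contour integration, the two multiple-integral lemmas, and (for completeness) the $_2F_1$ derivative identity \eqref{2F1eq} in succession produces $96$ independent $\mathbb Q$-linear relations modulo $\mathcal{I}_7$. This leaves a three-dimensional residual space; following the author's choice I take $\{\mathbf{H}_1,\mathbf{H}_2,\mathbf{H}_3\}$ as a basis. The target integral is $i_{2231}$, and the theorem amounts to the assertion that, once the system is solved, the $\mathbf{H}_i$ coefficients in the expression of $i_{2231}$ all vanish.

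The computation is organised in three layers. First, every weight-$\leq 7$ integral that appears on the right-hand side of a weight-$8$ relation has to be evaluated in terms of classical constants together with $\mathbf{F}_1$ at weight $6$ and $\mathbf{G}_1, \mathbf{G}_2$ at weight $7$; this database is built iteratively by running the same procedure at each lower weight. Second, each method is run at weight $8$: the IBP, FT and SR identities are purely algebraic manipulations of the $i_{abcd}$; the contour-integration identities require expanding
$$\frac{\log^p(1+z)\log^q(1-z)\log^r z}{z(1-z)},\qquad p+q+r \in \{7,8\},$$
on each of the four real segments and separating real and imaginary parts; and the $_2F_1$ identities require differentiating \eqref{2F1eq} a total of eight times and evaluating at $(a,b,c)=(1,1,2)$. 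Third, the resulting $99$-variable linear system is reduced over $\mathbb{Q}$ by Gaussian elimination, and the row indexed by $i_{2231}$ is read off; the stated closed form is exactly what appears after the coefficients of $\mathbf{H}_1, \mathbf{H}_2, \mathbf{H}_3$ are observed to collapse to $0$.

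The main obstacle is combinatorial rather than analytic. The $_2F_1$ step, although redundant at $w=8$ by Table \ref{table 1}, still requires expanding three copies of $\log\Gamma$ (at $1$ and at $1/2$) to order $8$ and convolving them with the integral representation of the hypergeometric function, producing many hundreds of terms; and each contour-integration arc expands via the binomial theorem into a high-degree polynomial in the logarithms. A single sign slip in any one coefficient would leak a spurious $\mathbf{H}_i$ dependence and destroy the cancellation. The delicate, non-obvious point is precisely the simultaneous vanishing of those three coefficients, which is not predicted by any structural consideration in the paper and has to be verified by the linear algebra itself. As a sanity check I would compute both sides to several hundred digits — the left-hand side by adaptive quadrature after the standard singularity desensitisation $x\mapsto 1-x$ near $x=1$, the right-hand side from the rapidly convergent series for $\Li_n(1/2)$ — and confirm numerical agreement before declaring the identity proved.
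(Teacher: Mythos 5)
Your proposal matches the paper's method exactly: the theorem is obtained by assembling all the weight-$8$ relations from the methods of Section 2 (with the lower-weight database built recursively), Gaussian elimination over $\mathbb{Q}$, and observing that the coefficients of $\mathbf{H}_1,\mathbf{H}_2,\mathbf{H}_3$ in the row for $i_{2231}$ vanish. The paper offers no independent proof beyond this linear-algebra reduction (it only adds the remark that Lemma \ref{MI2} is not even needed for this particular evaluation), so your plan is essentially the same as the author's.
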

This explicit result can still be obtained if one forgoes Lemma \ref{MI2}. 

Another surprising result is 
\begin{multline}\label{conj1w8}
\int_0^1 \frac{\log^4 (1-x) \log^3(1+x)}{1+x} dx = -12 \pi ^2 \zeta (3)^2+288 \zeta (3) \zeta (5)+12 \zeta (3) \log ^5(2)-12 \pi ^2 \zeta (3) \log ^3(2)+\\ 168 \zeta (5) \log ^3(2)+108 \zeta (3)^2 \log ^2(2)-2 \pi ^4 \zeta (3) \log (2)-48 \pi ^2 \zeta (5) \log (2)+\\ 720 \zeta (7) \log (2)- \frac{499 \pi ^8}{25200}+\frac{\log ^8(2)}{8}-\frac{1}{3} \pi ^2 \log ^6(2)-\frac{19}{60} \pi ^4 \log ^4(2)-\frac{1}{6} \pi ^6 \log ^2(2)
\end{multline}

It becomes more and more lengthy to write down a general explicit evaluation. For instance
\begin{multline*}\int_0^1 \frac{\log^3 (1-x) \log^4 x}{1+x} dx  = \frac{5 \pi ^2 \textbf{F}_1}{2}-3 \textbf{F}_1 \log ^2(2)+3 \textbf{G}_1 \log (2) -\frac{\textbf{H}_1}{2}+\textbf{H}_2+ \\ 288 \text{Li}_5\left(\frac{1}{2}\right) \zeta (3)+\frac{28}{5} \pi ^4 \text{Li}_4\left(\frac{1}{2}\right)-63 \pi ^2 \zeta (3)^2+ \frac{2259 \zeta (3) \zeta (5)}{2}-\frac{1}{5} 12 \zeta (3) \log ^5(2)+ \\4 \pi ^2 \zeta (3) \log ^3(2)+\frac{93}{2} \zeta (5) \log ^3(2)-36 \zeta (3)^2 \log ^2(2)+\frac{22}{5} \pi ^4 \zeta (3) \log (2)-\\ \frac{369}{4} \pi ^2 \zeta (5) \log (2)+\frac{11475}{8} \zeta (7) \log (2)-\frac{523 \pi ^8}{3360}+\frac{7}{30} \pi ^4 \log ^4(2)-\frac{10}{21} \pi ^6 \log ^2(2)
\end{multline*}

\subsection{Weight 9 and 10}
The full results for all weight 9 and weight 10 integrals are available in  \textit{Mathematica} format (Section \ref{mathematica}). Here I extract some notable ones. \\
Firstly, we have a nice two-term result, without any 'new' weight $6,7$ or $8$ constants.
\begin{multline*}
i_{3052} = -\frac{i_{6022}}{2}+120 \zeta (3)^3-\frac{4 \pi ^6 \zeta (3)}{3}-18 \pi ^4 \zeta (5)-360 \pi ^2 \zeta (7)+6720 \zeta (9)+ \\ 21 \zeta (3) \log ^6(2)-25 \pi ^2 \zeta (3) \log ^4(2)+480 \zeta (5) \log ^4(2)+300 \zeta (3)^2 \log ^3(2)-13 \pi ^4 \zeta (3) \log ^2(2)-\\ 240 \pi ^2 \zeta (5) \log ^2(2)+ 4320 \zeta (7) \log ^2(2)-60 \pi ^2 \zeta (3)^2 \log (2)+2880 \zeta (3) \zeta (5) \log (2)+\frac{\log ^9(2)}{6}-\\ \frac{1}{2} \pi ^2 \log ^7(2)-\frac{49}{60} \pi ^4 \log ^5(2)- \frac{107}{126} \pi ^6 \log ^3(2)-\frac{13}{45} \pi ^8 \log (2)
\end{multline*}

The family $i_{a0b2}$ seems to possess many nice properties, for example, this weight $10$ example:
\begin{multline}\label{conj1w10}\int_0^1 \frac{\log^5(1-x)\log^4 (1+x)}{1+x} dx = -20 \pi ^4 \zeta (3)^2+7200 \zeta (5)^2-960 \pi ^2 \zeta (3) \zeta (5)+14400 \zeta (3) \zeta (7)+20 \zeta (3) \log ^7(2)-\\ 40 \pi ^2 \zeta (3) \log ^5(2)+600 \zeta (5) \log ^5(2)+600 \zeta (3)^2 \log ^4(2)-\frac{76}{3} \pi ^4 \zeta (3) \log ^3(2)- 560 \pi ^2 \zeta (5) \log ^3(2)+\\ 8640 \zeta (7) \log ^3(2)-360 \pi ^2 \zeta (3)^2 \log ^2(2)+ 10080 \zeta (3) \zeta (5) \log ^2(2)+1440 \zeta (3)^3 \log (2)-\frac{20}{3} \pi ^6 \zeta (3) \log (2)-\\ 112 \pi ^4 \zeta (5) \log (2)- 2400 \pi ^2 \zeta (7) \log (2)+  40320 \zeta (9) \log (2)- \frac{149 \pi ^{10}}{1320}+\frac{\log ^{10}(2)}{10}-\frac{5}{12} \pi ^2 \\ \log ^8(2)-\frac{7}{9} \pi ^4 \log ^6(2)- \frac{19}{18} \pi ^6 \log ^4(2)-\frac{47}{60} \pi ^8 \log ^2(2)\end{multline}
A conjecture generalizing this is \ref{conj1}.

Apart from the above, the only non-trivial weight $10$ explicit result seems to be
\begin{multline}\label{conj2w10}\int_0^1 \frac{\log(1-x)\log^6 x \log^2(1+x)}{x} = \frac{5 \pi ^4 \textbf{F}_1}{4}+\frac{\pi ^2 \textbf{H}_1}{2}+6 \pi ^6 \text{Li}_4\left(\frac{1}{2}\right)-\frac{105 \pi ^4 \zeta (3)^2}{16}- \frac{945}{8} \pi ^2 \zeta (3) \zeta (5)-\\ \frac{11475 \zeta (3) \zeta (7)}{2}-\frac{11475 \zeta (5)^2}{4}+\frac{21}{4} \pi ^6 \zeta (3) \log (2)+\frac{1633 \pi ^{10}}{24640}+\frac{1}{4} \pi ^6 \log ^4(2)-\frac{1}{4} \pi ^8 \log ^2(2)\end{multline}
see also \ref{conj2}. \\

It is should be noted that in the pool of weight $9$ constants, $\Li_4(1/2) \Li_5(1/2)$ is  lacking. While for weight $10$, despite appearance of both $\Li_5(1/2)^2$ and $\Li_4(1/2)\Li_6(1/2)$, they are introduced by only one relation (Lemma \ref{MI2}). Therefore it is reasonable to suspect there might still be higher weight sporadic relation that introduce these constants.

\section{Discussions}
\subsection{Dimension of $\mathcal{I}_w / \mathcal{I}_{w-1}$}\label{residualspace}
\quad We first recapitulate some definitions mentioned in the introduction. $\mathcal{A}$ is the algebra generated over $\mathbb{Q}$ by all polylogarithm integrals, and $\mathcal{I}_n \subset \mathcal{A}$ is the ideal by those with weight $\leq n$. \par 
Let $N_w$ the number of "primitive" constant of weight $w$, for example, $\zeta(3), \Li_n(1/2), \textbf{F}_1$ for $n\geq 4$ is primitive, while $\Li_4(1/2)\Li_7(1/2)$ is not a primitive weight $11$ constant. Although being a vague concept, we only use it to illustrate one point: the number of possible product combinations of weight $w$ constants is the coefficient of $x^w$ in
$$\prod_{w=1}^\infty (1-x)^{-N_w}$$ 
It is expected that $N_w \geq 1$, so number of total combinations of weight $w$ is $\geq p(w)$, the number of partition of an integer, so grows like $e^{\sqrt{w}}$. Therefore when $w$ is large, the number of constants exceeds the number of integrals. A similar analysis on Euler sums has been done in \cite{Flajolet}. \par

Therefore in higher weight, it would be more conducive forget these constants and concentrate on existence of relation themselves, that is, the quantity $\dim_\mathbb{Q} \mathcal{I}_w / \mathcal{I}_{w-1}$.

\begin{table}[h]
\captionsetup{font=scriptsize}
\centering
\begin{tabular}{cccccccccc}
$w$                                                                        & 3                       & 4                       & 5                       & 6                       & 7                       & 8                       & 9                       & 10                      & 11                      \\ \cline{2-10} 
\multicolumn{1}{c|}{$\dim_\mathbb{Q} \mathcal{I}_w / \mathcal{I}_{w-1}$} & \multicolumn{1}{c|}{1}  & \multicolumn{1}{c|}{1}  & \multicolumn{1}{c|}{2}  & \multicolumn{1}{c|}{2}  & \multicolumn{1}{c|}{4}  & \multicolumn{1}{c|}{4}  & \multicolumn{1}{c|}{7}  & \multicolumn{1}{c|}{7}  & \multicolumn{1}{c|}{11} \\ \hline
$w$                                                                        & 12                      & 13                      & 14                      & 15                      & 16                      & 17                      & 18                      & 19                      & 20                      \\ \cline{2-10} 
\multicolumn{1}{c|}{$\dim_\mathbb{Q} \mathcal{I}_w / \mathcal{I}_{w-1}$} & \multicolumn{1}{c|}{12} & \multicolumn{1}{c|}{16} & \multicolumn{1}{c|}{17} & \multicolumn{1}{c|}{22} & \multicolumn{1}{c|}{24} & \multicolumn{1}{c|}{29} & \multicolumn{1}{c|}{31} & \multicolumn{1}{c|}{37} & \multicolumn{1}{c|}{40} \\ \hline
\end{tabular}
\caption{\textbf{Upper bound} for dimension of "new constants" for each weight $\leq 20$}
\label{tab:my-table}
\end{table}

For $w\geq 4$, the above table can deduced from Table 1 by calculating the difference between last two columns, then add $1$ if $w$ is even and $2$ if $w$ is odd, corresponding to $\Li_w(1/2)$ and $\zeta(w)$. My humble feeling is that these bounds can still be considerably lowered, especially when $w\geq 11$, due to not-yet-discovered methods to generate relations.

\subsection{\textit{Mathematica} files}\label{mathematica}
Results of weight $\leq 12$ have been fully computed, and is available in Mathematica format \cite{ESIntegrate}. New constants of weight $6-8$ are named as in previous section, and those of weight $9-12$ are not named. To compute, for example, $i_{3442}$, input
~\\[0.02in]

{\fontfamily{qcr}\selectfont
ESIntegrate[Log[1-x]\textasciicircum 3 Log[x]\textasciicircum 4 Log[1+x]\textasciicircum 4/ (1+x),\{x,0,1\}]}

~\\[0.01in]
For $13\leq w\leq 20$, the same file contains results modulo $\mathcal{I}_{w-1}\oplus \mathbb{Q}\Li_w(1/2)$. For example, the entry 
~\\[0.01in]

{\fontfamily{qcr}\selectfont
i4452, -((3 i8322)/7) - (3 i8412)/14 - i9222/7 + (2 i9312)/63 + (
  2 i9402)/63 + (10 iA212)/21 + (8 iA302)/45 - (20 iB112)/231 - (
  296 iB202)/1155 + (97 iC102)/2310}
~\\[0.01in] 
implies\footnote{Here $A=10, B=11$ and so on}
$$i_{4452}-\left(-\frac{3 i_{8322}}{7}-\frac{3i_{8412}}{14}-\frac{i_{9222}}{7}+\frac{2 i_{9312}}{63}+\frac{2 i_{9402}}{63}+\frac{10 i_{A212}}{21}+\frac{8 i_{A302}}{45}-\frac{20 i_{B112}}{231}-\frac{296 i_{B202}}{1155}+\frac{97 i_{C102}}{2310}\right)$$
is in $\mathcal{I}_{13}\oplus \mathbb{Q}\Li_{14}(1/2)$.

\subsection{Some conjectures}
\begin{conjecture}\label{conj1}
For any $n\geq 1$, the value of
$$\int_0^1 \frac{\log^{n+1}(1-x) \log^n(1+x)}{1+x} dx$$
is in the algebra over $\mathbb{Q}$ generated by $\pi^2, \log 2$ and $\zeta(n)$ for $n$ odd.
\end{conjecture}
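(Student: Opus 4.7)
The plan is to study the two-parameter generating function
\[F(s,t) := \int_0^1 (1-x)^s(1+x)^{t-1}\,dx,\]
from which the integral of interest emerges as
\[\int_0^1 \frac{\log^{n+1}(1-x)\log^n(1+x)}{1+x}\,dx = \frac{\partial^{2n+1} F}{\partial s^{n+1}\partial t^n}\bigg|_{s=t=0}.\]
The Möbius substitution $x = (1-u)/(1+u)$ rewrites $F(s,t) = 2^{s+t}\int_0^1 u^s(1+u)^{-s-t-1}\,du = 2^{s+t}\,B_{1/2}(s+1,t)$, an incomplete beta function. The reflection identity $B_{1/2}(a,b) + B_{1/2}(b,a) = B(a,b)$ (obtained via $v\mapsto 1-v$) then yields the central functional equation
\[F(s,t) + F(t-1,\,s+1) = \frac{2^{s+t}\,\Gamma(s+1)\,\Gamma(t)}{\Gamma(s+t+1)}.\]

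The right-hand side, after stripping the simple pole at $t=0$ via $\Gamma(t)=\Gamma(t+1)/t$, has Taylor coefficients in $\mathbb{Q}[\log 2, \pi^2, \zeta(3), \zeta(5), \dots]$ by the log-gamma expansion used in Lemmas \ref{Gamma1} and \ref{Gamma2}. The companion term $F(t-1, s+1) = \int_0^1 (1-x)^{t-1}(1+x)^s\,dx$ is singular at $t=0$; subtracting its principal part $2^s/t$ leaves the regularized analytic function $A(s,t) := \int_0^1 (1-x)^{t-1}[(1+x)^s - 2^s]\,dx$. The functional equation collapses to $F(s,t) = \widetilde{R}(s,t) - A(s,t)$ with $\widetilde{R}$ having ``nice'' coefficients, so the conjecture is equivalent to showing
\[g_{n+1,n} := \int_0^1 \frac{\log^n(1-x)\bigl[\log^{n+1}(1+x) - \log^{n+1}2\bigr]}{1-x}\,dx \;\in\; \mathbb{Q}[\log 2, \pi^2, \zeta(3), \zeta(5), \dots].\]

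Substituting $u = 1-x$ and expanding $\log^{n+1}(2-u) - \log^{n+1}2 = \sum_{k=1}^{n+1}\binom{n+1}{k}(\log 2)^{n+1-k}\log^k(1-u/2)$, the quantity $g_{n+1,n}$ becomes a $\mathbb{Q}[\log 2]$-linear combination of Nielsen-type integrals $\int_0^1 u^{-1}\log^n(u)\,\log^k(1-u/2)\,du$, which after $v = u/2$ reduce to $\log 2$-weighted combinations of Nielsen generalized polylogarithm values $S_{j+1,k}(1/2)$.

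The main obstacle lies in this last step: individual $S_{n,p}(1/2)$ with $p\geq 2$ are generally \emph{not} in the conjectured algebra (they typically require $\Li_k(1/2)$ for $k\geq 4$). One must prove that the precise combination imposed by the binomial coefficients forces all such ``bad'' constants to cancel identically. The most promising routes are (i) induction on $n$, exploiting the diagonal slice $F(s,s+1) = 4^s\Gamma(s+1)^2/\Gamma(2s+2)$ of the functional equation (whose Taylor coefficients are manifestly nice) together with the lower-weight instances of the conjecture; or (ii) invoking a parity theorem of Borwein--Girgensohn type for alternating multiple zeta values of even weight, which would identify our specific combination as belonging to the depth-reducible, odd-weight subalgebra.
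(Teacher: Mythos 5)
This statement is explicitly a \emph{conjecture} in the paper: the author states that ``a unified proof for general $n$ remains elusive,'' and only verifies the cases $n\le 5$ computationally. So there is no paper proof to compare against, and any complete argument you gave would be new. Your reductions are correct as far as they go: the identification of the integral as $\partial_s^{n+1}\partial_t^n F|_{(0,0)}$, the Möbius substitution giving $F(s,t)=2^{s+t}B_{1/2}(s+1,t)$, the reflection identity producing $F(s,t)+F(t-1,s+1)=2^{s+t}\Gamma(s+1)\Gamma(t)/\Gamma(s+t+1)$, the matching of the $2^s/t$ poles, and the consequent equivalence of the conjecture with the niceness of $g_{n+1,n}$, i.e.\ with a specific $\mathbb{Q}[\log 2]$-combination of Nielsen values $S_{j,k}(1/2)$, all check out. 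This is a clean reformulation, though it is essentially the paper's own toolkit (the fractional transformation of Section 2.2 combined with the beta/Gamma expansion of Section 2.4) packaged into one functional equation.

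However, the proposal is not a proof, and you say so yourself: the entire difficulty of the conjecture has been relocated into the unproved claim that the ``bad'' constants ($\Li_k(1/2)$, $k\ge 4$, and their products) cancel in the binomially-weighted combination of $S_{j,k}(1/2)$. Neither of your two suggested routes closes this. Route (i) fails as stated because the diagonal slice $F(s,s+1)=4^s\Gamma(s+1)^2/\Gamma(2s+2)$ constrains only the single sum $\sum_{a+b=N}\binom{N}{a}\partial_s^a\partial_t^b F|_{(0,0)}$ in each weight, i.e.\ one linear relation among many unknown mixed partials, which is far from pinning down the particular coefficient $(a,b)=(n+1,n)$ even granting all lower-weight cases. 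Route (ii) is also not applicable as described: the Borwein--Girgensohn parity theorem concerns depth-two (alternating) double zeta values, whereas your $S_{j,k}(1/2)$ with $k$ up to $n+1$ correspond to depth-$k$ objects of even total weight $2n+2$, exactly the regime where depth reduction is not automatic. The gap is therefore genuine and coincides with the open content of the conjecture; what you have is a plausible and possibly useful equivalent reformulation, not a proof.
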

The case for $n=2, w=6$ is
\begin{multline*}\int_0^1 \frac{\log^3(1-x) \log^2(1+x)}{1+x} dx = 6 \zeta (3)^2+6 \zeta (3) \log ^3(2)-2 \pi ^2 \zeta (3) \log (2)+24 \zeta (5) \log (2)-\\ \frac{23 \pi ^6}{2520}+\frac{\log ^6(2)}{6}-\frac{1}{4} \pi ^2 \log ^4(2)-\frac{1}{12} \pi ^4 \log ^2(2)\end{multline*}
The case for $n=3,4$ are (\ref{conj1w8}) and (\ref{conj1w10}) respectively. The weight $12$ case for $n=5$ is also true, look into the \textit{Mathematica} files for the value. \par

The value of such integral should still be derivable even if relations from ${_2F_1}$ method are omitted, but a unified proof for general $n$ remains elusive to me.

\begin{conjecture}\label{conj2}
For any $n\geq 0$, the value of
$$\int_0^1 \frac{\log(1-x) \log^{2n} x \log^2 (1+x) }{x} dx \in \mathcal{I}_{2n+2}$$
\end{conjecture}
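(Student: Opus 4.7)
The plan is to combine an algebraic decomposition of $\log^2(1+x)$ with the quadratic and fractional substitutions of Section 2, reducing the conjecture to a single extra relation that we then supply via contour integration. Start with
\[
\log^2(1+x) = [\log(1-x^2)-\log(1-x)]^2 = \log^2(1-x^2) - 2\log(1-x^2)\log(1-x) + \log^2(1-x),
\]
so that $I_n := i_{1,2n,2,1} = A - 2B + C$, where $B = i_{3,2n,0,1}+i_{2,2n,1,1}$ (via $\log(1-x^2)=\log(1-x)+\log(1+x)$) and $C = i_{3,2n,0,1}$. By Lemma \ref{Gamma1}, $C$ is a $\mathbb{Q}$-polynomial in $\pi^2$ and odd zetas of total weight $2n+4$, every monomial of which factors into at least two factors each of weight $\leq 2n+2$; hence $C \in \mathcal{I}_{2n+2}$.

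For $A = \int_0^1 \log(1-x)\log^2(1-x^2)\log^{2n}x/x\,dx$, substitute $u=x^2$ and split $\log(1-\sqrt u) = \tfrac{1}{2}\log(1-u) - \tanh^{-1}\sqrt u$. The first half yields $\tfrac{1}{2^{2n+2}}i_{3,2n,0,1}$. For the $\tanh^{-1}\sqrt u$ piece substitute $u=v^2$ and use $\tanh^{-1}v = \tfrac{1}{2}[\log(1+v)-\log(1-v)]$ together with $\log^2(1-v^2)=[\log(1-v)+\log(1+v)]^2$; direct expansion gives
\[
\tanh^{-1}v\cdot \log^2(1-v^2) = \tfrac{1}{2}\bigl[\log^3(1+v)-\log^3(1-v)+\log(1-v)\log^2(1+v)-\log^2(1-v)\log(1+v)\bigr],
\]
which folds $I_n$ itself back into the right-hand side. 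Collecting everything yields the master identity
\[
I_n = \tfrac{1-2^{2n+1}}{3\cdot 2^{2n+1}}\,i_{3,2n,0,1} - \tfrac{1}{3}\,i_{0,2n,3,1} - i_{2,2n,1,1}.
\]
Since $i_{3,2n,0,1}\in \mathcal{I}_{2n+2}$, the conjecture is equivalent to the statement $i_{0,2n,3,1} + 3\,i_{2,2n,1,1} \in \mathcal{I}_{2n+2}$. For $n=0$ this combination equals $11\pi^4/240$, giving the clean value $I_0 = -\pi^4/240$.

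To supply this missing relation uniformly in $n$, I would invoke the contour-integration method of Section \ref{CIsec} applied to $\log(1+z)\log^2(1-z)\log^{2n+1}z/[z(1-z)]$, i.e.\ $p=1$, $q=2$, $r=2n+1$ with $p+q+r = 2n+4 = w$. The imaginary part of the vanishing contour sum then produces a weight-$(2n+4)$ relation whose right-hand side has the form $\sum_{i\geq 1}\pi^{2i}\cdot(\text{weight }2n+4-2i\text{ integrals})$, automatically in $\mathcal{I}_{2n+2}$ because $\pi^{2i}\in \mathcal{I}_2$ for $i\geq 1$. Expanding the four pieces $[0,1], [1,\infty), [-1,0], (-\infty,-1]$ via the substitutions of Section \ref{CIsec} yields a $\mathbb{Q}$-linear combination of standard weight-$(2n+4)$ integrals on the left. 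Combined with the IBP relation $(2n+1)I_n = i_{0,2n+1,2,0}-2i_{1,2n+1,1,2}$ and the master identity above, the system should isolate $i_{0,2n,3,1}+3\,i_{2,2n,1,1}$ modulo $\mathcal{I}_{2n+2}$.

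The main obstacle is proving this cancellation uniformly in $n$. For each fixed $n$ the system is finite-dimensional, and the computations summarized in Table \ref{table 1} confirm the result through weight $12$. A clean all-$n$ proof appears to require either a closed form for $F(s):=\int_0^1 \log(1-x)\log^2(1+x)\,x^{s-1}dx$ manifestly reducing every even derivative $F^{(2n)}(0)$ modulo $\mathcal{I}_{2n+2}$, or an $n$-parametric shuffle identity among the alternating MZVs that $i_{0,2n,3,1}$ and $i_{2,2n,1,1}$ represent. The hypergeometric $_2F_1$ method of Section 2.8 is the most natural source of such a uniform family, since mixed derivatives of (\ref{2F1eq}) naturally produce relations indexed by the orders of differentiation, matching the parameterization of the conjecture.
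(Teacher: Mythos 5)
This statement is an open conjecture of the paper, not a theorem: the author only verifies it symbolically for small $n$ (explicitly through weight $12$, and modulo constants through weight $20$), so your attempt must be judged as a would-be new proof, and it does not close. Your algebraic work is correct as far as it goes: the decomposition $\log^2(1+x)=[\log(1-x^2)-\log(1-x)]^2$, the $u=x^2$ substitutions, and the $\tanh^{-1}$ expansion all check out, and the resulting ``master identity'' is arithmetically right. But it is worth recognizing what that identity is: eliminating $I_n$ from it reproduces exactly the square-replacement relation of Section 2.5,
\begin{equation*}
i_{0,2n,3,1}+3\,i_{1,2n,2,1}+3\,i_{2,2n,1,1}+i_{3,2n,0,1}=\tfrac{1}{2^{2n+1}}\,i_{3,2n,0,1},
\end{equation*}
with $a=3$, $b=2n$. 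So your reduction of the conjecture to ``$i_{0,2n,3,1}+3\,i_{2,2n,1,1}\in\mathcal{I}_{2n+2}$'' adds no relation that is not already in the paper's pool; it is an equivalent reformulation, and by Table \ref{table 1} the full pool of relations (IBP, FT, GP, SR, CI, MI, $_2F_1$) still leaves a nonzero residual dimension at every weight $\geq 6$, so one cannot expect the system to ``isolate'' any particular combination without a new idea.

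The genuine gap is therefore the final step. You assert that the contour-integration relation with $(p,q,r)=(1,2,2n+1)$, together with IBP and the master identity, ``should isolate'' $i_{0,2n,3,1}+3\,i_{2,2n,1,1}$ modulo $\mathcal{I}_{2n+2}$, but you neither expand that relation nor show that the target combination lies in the span of the available left-hand sides uniformly in $n$ --- and this is precisely the content of the conjecture. (Your own closing paragraph concedes this.) Two further cautions: membership of the right-hand sides in $\mathcal{I}_{2n+2}$ is not automatic for every candidate relation --- for instance at weight $10$ the value of $i_{1,6,2,1}$ involves $\pi^4\mathbf{F}_1$, $\pi^2\mathbf{H}_1$ and $\pi^6\operatorname{Li}_4(1/2)$, so whichever relation produces it must carry exactly these weight-$6$ and weight-$8$ constants multiplied by powers of $\pi^2$, a rather delicate constraint; and the $_2F_1$ method you point to as the ``natural source'' only begins to contribute independent relations at $w\geq 9$ and, per the paper's empirical remark, tends to subsume CI and MI only for odd $w$, whereas the weights here, $2n+4$, are all even. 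What you have is a correct and clean restatement of the conjecture, plus a plausible but unproved strategy; the conjecture remains open.
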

The case $n=0$ is the well-known
$$\int_0^1 \frac{\log(1-x) \log^2 (1+x) }{x} dx = -\frac{\pi^4}{240}$$
For $n=1$, result of integral is:
$$2 \pi ^2 \text{Li}_4\left(\frac{1}{2}\right)-\frac{15 \zeta (3)^2}{4}+\frac{7}{4} \pi ^2 \zeta (3) \log (2)-\frac{163 \pi ^6}{10080}+\frac{1}{12} \pi ^2 \log ^4(2)-\frac{1}{12} \pi ^4 \log ^2(2)$$
$n=3$ is the equation (\ref{conj2w10}), again see the \textit{Mathematica} files for the case $n=2,4$. 
\par 
Both conjectures are consistent with the behaviour of $i_{1,2n,2,1}$ modulo constants, complied for weight $\leq 20$. (See Section \ref{residualspace}). \\
An expression for $i_{a0c2}$ in terms of MZVs and multiple polylogarithm is given by Xu at \cite{XuCe}, but it seems not directly applicable for our problem.
~\\[0.03in]


\begin{thebibliography}{11}
\bibitem{2}
Coffey, M. W. (2010). Some definite logarithmic integrals from Euler sums, and other integration results. \textit{arXiv preprint arXiv}:1001.1366. https://arxiv.org/abs/1001.1366

\bibitem{Flajolet}
Flajolet, P., Salvy, B. (1998). Euler sums and contour integral representations. \textit{Experimental Mathematics}, 7(1), 15-35.

\bibitem{1}
Laurenzi, B. J. (2010). Logarithmic Integrals, Polylogarithmic Integrals and Euler sums. \textit{arXiv preprint arXiv}:1010.6229. https://arxiv.org/pdf/1010.6229.pdf

\bibitem{slater}
Slater, L. (1966). \textit{Generalized hypergeometric functions}. Cambridge University Press.

\bibitem{XuCe}
Xu, C. (2019). Integrals of logarithmic functions and alternating multiple zeta values. \textit{Mathematica Slovaca}, 69(2), 339-356.

\bibitem{3}
Xu, C. (2017). Evaluations of Euler-Type Sums of Weight $\le 5$. \textit{Bulletin of the Malaysian Mathematical Sciences Society}, 1-31.

\bibitem{ESIntegrate}
Au, K. C. ESIntegrate 1-12 upload. doi: 10.13140/RG.2.2.21310.43840

\end{thebibliography}
\end{document}